\newtheorem{theo}{Theorem}[section]
\newtheorem{lem}[theo]{Lemma}
\newtheorem{prop}[theo]{Proposition}
\newtheorem{cor}[theo]{Corollary}
\newtheorem{rem}[theo]{Remark}
\DeclareMathOperator{\dist}{dist}
\DeclareMathOperator{\diverg}{div}
\DeclareMathOperator{\curl}{curl}
\newcommand{\N}{\mathbb{N}}
\newcommand{\R}{\mathbb{R}}
\newcommand{\Z}{\mathbb{Z}}
\newcommand{\eps}{\epsilon}
\newcommand{\dx}{\,\mathrm{d}x}
\newcommand{\dt}{\,\mathrm{d}t}
\newcommand{\dy}{\,\mathrm{d}y}
\newcommand{\dz}{\,\mathrm{d}z}
\newcommand{\dd}{\,\mathrm{d}}
\newcommand{\image}{\mathrm{im}\;}
\renewcommand{\eps}{\varepsilon}
\renewcommand{\epsilon}{\varepsilon}
 \def\Xint#1{\mathchoice
 	{\XXint\displaystyle\textstyle{#1}}%
 	{\XXint\textstyle\scriptstyle{#1}}%
 	{\XXint\scriptstyle\scriptscriptstyle{#1}}%
 	{\XXint\scriptscriptstyle\scriptscriptstyle{#1}}%
 	\!\int}
 \def\XXint#1#2#3{{\setbox0=\hbox{$#1{#2#3}{\int}$ }
 		\vcenter{\hbox{$#2#3$ }}\kern-.58\wd0}}
 \def\dashint{\Xint-}
\numberwithin{equation}{section}
\DeclareMathAlphabet{\mathcal}{OMS}{cmsy}{m}{n}
\begin{document}
\allowdisplaybreaks
	%% TITLE MATTERS
	\title{M\"uller-Zhang truncation for general linear constraints with first or second order potential}
	
	\author{Dennis Gallenm\"uller\footnotemark[1]}
	
	\date{}
	
	\maketitle
	
	\begin{abstract}
		Let $\mathcal{B}$ be a homogeneous differential operator of order $l=1$ or $l=2$. We show that a sequence of functions of the form $(\mathcal{B}u_j)_j$ converging in the $L^1$-sense to a compact, convex set $K$ can be modified into a sequence converging uniformly to this set provided that the derivatives of order $l$ are uniformly bounded. We prove versions of our result on the whole space, an open domain, and for $K$ varying uniformly continuously on an open, bounded domain. This is a conditional generalization of a theorem proved by S.~M\"uller for sequences of gradients. Moreover, a potential of order two for the linearized isentropic Euler system is constructed.

		%\noindent\textsc{}%MSC (2010): 35Q31 (primary); 35Q30, 35L65, 76N10.}
		%\noindent\textsc{)%Keywords: }
		
	\end{abstract}
	
	\renewcommand{\thefootnote}{\fnsymbol{footnote}}
	
	\footnotetext[1]{Institut f\"ur Angewandte Analysis, Universit\"at Ulm, Helmholtzstra\ss e 18, 89081 Ulm, Germany. Email: dennis.gallenmueller@uni-ulm.de}

	\section{Introduction}
	In the calculus of variations, particularly in the context of quasiconvexity and gradient Young measures, understanding sequences of gradients is crucial. If we are given such a sequence of gradients with a certain limit behavior, one might ask if there exists another sequence of this form with better regularity or convergence properties but exhibiting the same limit behavior. In this context, K.~Zhang \cite{Z92} showed that a sequence of weakly differentiable functions $(u_j)_j$ on $\R^d$ which converges to a ball $B_R(0)$ for $R>0$ in the sense that
	\begin{align}
		\int\limits_{\R^d}^{}\dist(Du_j,B_R(0))\dx\rightarrow 0\label{eq:zhang}
	\end{align}
	can be modified into a sequence $(w_j)_j$ of uniformly Lipschitz functions. This modification takes place on sets whose measure decreases with increasing $j$ such that one obtains $|\{u_j\neq w_j \}|\rightarrow 0$. Zhang's result has been developed further by S.~M\"uller, see \cite{M99}. In his paper M\"uller presents a method to regularize a sequence of $W^{1,1}$-functions satisfying (\ref{eq:zhang}) where the ball $B_R(0)$ is replaced by any compact, convex set $K$ in such a way that the convergence of the new sequence $(w_j)_j$ is uniform, i.e.
	\begin{align*}
		\|\dist(Dw_j,K)\|_{L^{\infty}}\rightarrow 0,
	\end{align*}
	and also the sets of modification have measure tending to zero, hence $|\{u_j\neq w_j \}|\rightarrow 0$.\\
	Our aim in the present work is to generalize M\"uller's method to $L^1$-convergent sequences of functions of the form $(\mathcal{B}u_j)_j$, where $\mathcal{B}$ is a linear homogeneous differential operator of order $l=1$ or $l=2$. For that, we will have to additionally assume that $\|D^lu_j\|_{L^{\infty}}$ is uniformly bounded in order to obtain a uniformly convergent sequence, cf. Theorem \ref{theo:muellertheo2} below.\\
	The motivation for the present work was a better understanding of limit processes under general linear constraints. Especially improving the properties of generating sequences for Young measures was an important aspect, cf. Corollary \ref{cor:muellercorollary3}. There are prominent examples of linear constraints with potentials of first or second order to which our results apply. For example the symmetric gradient is a first order potential for a second order linear constraint. Furthermore, the linearized compressible Euler equations come with a second order potential operator. The latter two examples will be discussed in Section \ref{sect:twoexamples}.\\
	For proving our result we roughly follow the strategy of M\"uller in \cite{M99}, but we need to introduce a different method of regularization which is crucial for our generalization. The outline of M\"uller's proof, and also of our proof, goes as follows:\\
	First obtain an $L^1$-estimate for a certain regularization scheme on balls. Then determine and control the set of balls on which the regularization has to be applied. Finally, one iterates this procedure for each function $u_j$ to arrive at the uniformly convergent sequence $(w_j)_j$.\\
	In attempting such a generalization there arise some difficulties. One consists of the fact that for general homogeneous operators $\mathcal{B}$ of order $l$ it is not true that $|D^lu|\leq C|\mathcal{B}u|$ holds pointwise a.e. for a fixed constant $C$. To circumvent this problem in the proof we additionally need to assume that the derivatives of order $l$ are uniformly bounded. This assumption is of technical nature. However, in Remark \ref{rem:refereequeryone} below we give an example of a situation where this is naturally given. Without this bound estimating the occurring highest order derivatives is not possible.\\
	Furthermore, in the case $l=2$ we will face a more subtle problem: One has to estimate terms of the form $|u_j(x)-u_j(y)|$ which is not possible with a mere bound on $\|D^2u_j\|_{L^{\infty}}$. We therefore need to make use of a different type of regularization on balls, see Lemma \ref{lemm:muellerlemma5}, which should be compared with Lemma 5 in \cite{M99}. Instead of mollifying with a fixed radius and achieving the transition to the original function by multiplication with a smooth cut-off function $\varphi$ and its counterpart $(1-\varphi)$, we regularize by mollifying with varying radius. More precisely, the radius of the mollification is given by a smooth function decreasing to zero with increasing distance to the center of the ball. By Lebesgue's differentiation theorem the so defined regularized function coincides with the original function as soon as the radius of mollification tends to zero. The benefit of using this method is that differentiating the new function yields no zeroth order derivatives of $u_j$ and the terms involving the first order derivatives can be estimated using the fundamental theorem of calculus and the uniform bounds on $\|D^2u_j\|_{L^{\infty}}$, cf. the proof of Lemma \ref{lemm:muellerlemma5}. Note that for orders $l>2$ one would have to come up with yet another regularization scheme since then with our method the lower order derivatives cannot be controlled anymore.\\
	After proving our main result on $\R^d$ we also present a local version thereof in Theorem \ref{theo:mueller4} and we moreover show that certain convex integral bounds are preserved. These local results lead to interesting applications: In Corollary \ref{cor:varyingk} we show that $K$ can be replaced by a family of compact, convex sets $(K_x)_x$ depending uniformly continuously on $x$. Moreover, our local result allows for a reformulation in the language of Young measures, cf. Corollary \ref{cor:muellercorollary3}.\\
	The latter Young measure formulation of our result is an important intermediate goal of an ongoing joint project with E.~Wiedemann. We try to characterize measure-valued solutions to the isentropic Euler system that are generated by weak solutions. For the incompressible Euler equations every measure-valued solution is generated in this way, cf. \cite{SW12}. In contrast to that, some recent work shows that there exist measure-valued solutions to the isentropic Euler system that are neither generated by weak solutions nor by a vanishing viscosity limit, cf. \cite{CFKW17} and \cite{GW20}. We may not apply our results to the compressible Euler system directly but to its linearization (\ref{eq:CEsubsolution}). The latter possesses a potential of order two, which was pointed out to me by E.~Wiedemann in dimension $3+1$. At the end of the present paper we give a construction of this potential for dimensions $d+1\geq 2$.\\
	Besides the linearized compressible Euler equations other partial differential operators also have low order potentials. One already mentioned prominent example is the corresponding linear constraint to the symmetric gradient as potential. However, a further generalization of our results to higher order operators is desirable as the work of B.~Rai\c{t}\u{a} \cite{R18} shows that any constant-rank homogeneous differential operator has a potential which is in general of high order. This generalization would have consequences for the theory of general $\mathcal{A}$-free Young measures and $\mathcal{A}$-quasiconvex functions, developed e.g. in \cite{A19}, \cite{KR19}, and \cite{FM99}.\\
	The organization of this paper is as follows. In Section \ref{sect:presentationofmainresults} we formulate our main results and prove some direct consequences. The main proofs are presented in Section \ref{sect:proofsinrd}. At the end of the paper, we present two examples of linear constraints with potentials of order one or two. For that we derive in Section \ref{sect:euler} a potential of order two for the linearized isentropic Euler system.	
\section{Presentation of the main results}\label{sect:presentationofmainresults}
	Let $\mathcal{B}=\sum\limits_{|\alpha|=l}^{}B^{\alpha}\partial_{\alpha}$ denote a homogeneous differential operator of order $l\in \{1,2\}$ with $B^{\alpha}\in \R^{k\times m}$ constant. We will use the notation in \cite{M99}, which we will briefly revise for convenience:\\
	In what follows $K\subset \R^k$ denotes a compact, convex set. We define $|K|_{\infty}:=\max\{|A|\,:\,A\in K \}$. The sublevel sets
	\begin{align*}
		K_{\gamma}:=\{B\subset \R^k\,:\,\dist(B,K)\leq \gamma \}
	\end{align*}
	are again compact and convex. Moreover, one has
	\begin{align*}
		\dist(B,K_{\gamma})\leq (\dist(B,K)-\gamma)^+,
	\end{align*}
	where $b^+=\max(b,0)$ for $b\in\R$.\\
	Our main result on the whole space $\R^d$, which corresponds to Theorem 2 in \cite{M99}, will be a direct consequence of Proposition \ref{prop:muellerprop7undcor8} below.
	\begin{theo}\label{theo:muellertheo2}
		Let $K$ be a compact, convex set in $\R^k$ and $l=1$ or $l=2$. Further, let $(u_j)\in W^{l,\infty}_{\operatorname{loc}}(\R^d,\R^m)$ be a sequence of functions such that $\|D^lu_j\|_{L^{\infty}(\R^d)}\leq M|K|_{\infty}$ and
		\begin{align*}
			\int\limits_{\R^d}^{}\dist(\mathcal{B}u_j,K)\dx\rightarrow 0.
		\end{align*}
		Then there exist $(w_j)\in W^{l,\infty}(\R^d,\R^m)$ such that
		\begin{align*}
			\|\dist(\mathcal{B}w_j,K)\|_{L^{\infty}(\R^d)}&\rightarrow 0,\\
			|\{u_j\neq w_j \}|&\rightarrow 0.
		\end{align*}
	\end{theo}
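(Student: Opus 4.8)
The plan is to derive Theorem \ref{theo:muellertheo2} from the (as yet unstated) Proposition \ref{prop:muellerprop7undcor8}, following M\"uller's iteration scheme but adapted to the potential operator $\mathcal{B}$. First I would recall the standard reduction: since $K$ is compact and convex, replacing $K$ by the sublevel set $K_\gamma$ only enlarges it slightly while making the distance function strictly subadditive in the sense recorded above, so it suffices to produce, for each fixed small $\gamma>0$, a sequence $(w_j^\gamma)$ with $\|\dist(\mathcal{B}w_j^\gamma,K_\gamma)\|_{L^\infty}\to 0$ and $|\{u_j\neq w_j^\gamma\}|\to 0$, and then a diagonal argument over a sequence $\gamma_n\downarrow 0$ closes the proof. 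The role of the uniform bound $\|D^l u_j\|_{L^\infty}\leq M|K|_\infty$ is exactly to keep all the estimates in the iteration uniform in $j$; it must be propagated (up to a fixed multiplicative constant) through every application of the ball-regularization lemma, so I would track the constant carefully.

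The core is the iteration on dyadic or Besicovitch-type families of balls. Starting from $u_j$, one identifies the \enquote{bad set} where $\dist(\mathcal{B}u_j,K)$ is still large; by the $L^1$-convergence hypothesis this set has small measure for large $j$. One then covers (a neighborhood of) this bad set by balls and applies the ball-regularization of Lemma \ref{lemm:muellerlemma5} on each ball — for $l=1$ the usual mollify-and-cut-off construction, for $l=2$ the variable-radius mollification described in the introduction, which avoids producing zeroth-order terms in $u_j$ and lets one estimate the first-order contributions via the fundamental theorem of calculus together with the bound on $\|D^2 u_j\|_{L^\infty}$. Each pass lowers the $L^1$-mass of $\dist(\mathcal{B}u_j,K_\gamma)$ by a definite factor while changing $u_j$ only on a set whose measure is controlled by that $L^1$-mass; since the new function still has $\|D^l\,\cdot\,\|_{L^\infty}$ bounded by a fixed multiple of $|K|_\infty$, the procedure can be repeated. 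Summing the geometric series in the number of iterations yields, in the limit, a function $w_j^\gamma\in W^{l,\infty}(\R^d,\R^m)$ with $\|\dist(\mathcal{B}w_j^\gamma,K_\gamma)\|_{L^\infty}$ as small as we like and $|\{u_j\neq w_j^\gamma\}|$ bounded by a fixed multiple of $\int_{\R^d}\dist(\mathcal{B}u_j,K)\,\dx\to 0$. This is precisely the content I would package into Proposition \ref{prop:muellerprop7undcor8}, so here the theorem follows by applying it with $\gamma=\gamma_n$, extracting a subsequence $j_n\to\infty$ along which both quantities are below $1/n$, and relabeling.

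The main obstacle, and the place where genuine work beyond \cite{M99} is needed, is the ball-regularization step in the case $l=2$: one must show that after the variable-radius mollification the function matches $u_j$ outside the ball (this is where Lebesgue differentiation enters, since the mollification radius decays to zero at the boundary), that the modification set has the right measure bound, and — crucially — that $\mathcal{B}$ applied to the regularized function is, up to an error controlled by $\gamma$ and by the local $L^1$-mass of $\dist(\mathcal{B}u_j,K)$, again close to $K$. The difficulty is that $\mathcal{B}$ of the new function involves not only $\mathcal{B}u_j$ convolved with the mollifier but also commutator terms coming from differentiating the variable radius; these carry first-order derivatives of $u_j$, which is exactly why the bound on $\|D^2 u_j\|_{L^\infty}$ (used to bound oscillations of $Du_j$ on a small ball via the fundamental theorem of calculus) is indispensable. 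Assuming Lemma \ref{lemm:muellerlemma5} and Proposition \ref{prop:muellerprop7undcor8} as stated, however, the deduction of Theorem \ref{theo:muellertheo2} is the routine diagonalization sketched above.
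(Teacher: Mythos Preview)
Your proposal is essentially correct and follows the paper's approach: the paper likewise states that Theorem~\ref{theo:muellertheo2} is a direct consequence of Proposition~\ref{prop:muellerprop7undcor8}, and your outline of that proposition (iterate the ball-regularization of Lemma~\ref{lemm:muellerlemma5} via a covering argument, sum a geometric series, track the $\|D^l\cdot\|_{L^\infty}$ bound) matches the paper's Lemmas~\ref{lemm:muellerlemma5}--\ref{lemm:muellerlemma6} and the proof of Proposition~\ref{prop:muellerprop7undcor8}. One small point: your final \enquote{extract a subsequence $j_n$ and relabel} is slightly looser than needed, since the theorem asks for $w_j$ defined for \emph{every} $j$; the cleaner route (implicit in the paper, cf.\ Remark after the theorem) is to choose, for each $j$, a scale $\gamma_j\downarrow 0$ slowly enough that $\lambda_j\gamma_j^{-(d+1)}\to 0$ and apply Proposition~\ref{prop:muellerprop7undcor8} once per $j$ --- but your diagonal argument can be made to work as well by setting $w_j:=w_j^{\gamma_n}$ for $j$ in the block $[J_n,J_{n+1})$.
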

	\begin{theo}\label{theo:mueller4}
		Let $K$ be a compact, convex set in $\R^k$, let $\Omega\subset \R^d$ be open and let $l=1$ or $l=2$. Further, let $(u_j)\in W^{l,\infty}_{\operatorname{loc}}(\Omega,\R^m)$ be a sequence of functions satisfying $\|D^lu_j\|_{L^{\infty}(\Omega)}\leq M|K|_{\infty}$ and
		\begin{align*}
			u_j&\rightarrow u_0\text{ in }L^1_{\operatorname{loc}}(\Omega,\R^m),\\
			\dist(\mathcal{B}u_j,K)&\rightarrow 0\text{ in }L^1(\Omega).
		\end{align*}
		Then there exist functions $w_j\in W^{l,\infty}_{\operatorname{loc}}(\Omega,\R^m)$ and an increasing sequence of open sets $(U_j)$, which are compactly contained in $\Omega$, such that
		\begin{align*}
			w_j&=u_0\text{ on }\Omega\backslash U_j,\\
			|\{u_j\neq w_j \}\cap U_j|&\rightarrow 0,\\
			\|\dist(\mathcal{B}w_j,K)\|_{L^{\infty}(\Omega)}&\rightarrow 0.
		\end{align*}
		Moreover, if $|\Omega|<\infty$ then $|\{u_j\neq w_j \}|\rightarrow 0$.
	\end{theo}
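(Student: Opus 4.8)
The plan is to deduce the local statement from the global result by an exhaustion-and-gluing argument, the same way M\"uller passes from his Theorem 2 to a domain version. First I would fix an increasing exhaustion $(\Omega_n)$ of $\Omega$ by open sets with $\Omega_n \Subset \Omega_{n+1} \Subset \Omega$ and $\bigcup_n \Omega_n = \Omega$, together with cut-off functions $\chi_n \in C_c^\infty(\Omega_{n+1})$ with $\chi_n \equiv 1$ on $\overline{\Omega_n}$. For a fixed $n$, apply Theorem \ref{theo:muellertheo2} (via Proposition \ref{prop:muellerprop7undcor8}) on a ball or cube containing $\Omega_{n+1}$ to the extended functions $\chi_n(u_j - u_0) + u_0$ — or, more cleanly, work directly on $\Omega_{n+1}$ using the interior version supplied by Proposition \ref{prop:muellerprop7undcor8}, which already localizes the modification to a controlled set of balls.

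The key steps, in order, are: (i) \emph{Localization.} Since the modification in M\"uller's scheme is carried out on a union of disjoint balls whose total measure is bounded by a quantity controlled by $\lambda_j = \tfrac{1}{|K|_\infty}\int \dist(\mathcal{B}u_j, K)\,\dx$ restricted to the region in question, and since on the complement of those balls the new function equals the old one, I can arrange that all modifying balls for the $n$-th stage lie inside $\Omega_{n+1}$. (ii) \emph{Diagonalization.} For each $n$ the global-type result gives a sequence $(w_j^{(n)})_j$ with $\|\dist(\mathcal{B}w_j^{(n)}, K)\|_{L^\infty} \to 0$ and $|\{u_j \neq w_j^{(n)}\}\cap\Omega_{n+1}|\to 0$ as $j\to\infty$; choose $j_n \nearrow \infty$ fast enough that the errors at stage $n$ are below $1/n$, and set $w_j := w_j^{(n)}$, $U_j := \Omega_{n+1}$ for $j_n \le j < j_{n+1}$. (iii) \emph{Matching to $u_0$ off $U_j$.} Here I use the $L^1_{\loc}$-convergence $u_j \to u_0$: outside the modifying balls $w_j^{(n)} = u_j$, and one replaces $u_j$ by $u_0$ on $\Omega \setminus U_j$ — the resulting discrepancy is absorbed into $\{u_j \neq w_j\}$, whose measure inside $U_j$ still tends to zero because $\|u_j - u_0\|_{L^1(U_j)} \to 0$ forces $|\{u_j \neq u_0\}\cap K'|\to 0$ only after another truncation-type argument; alternatively, one observes that $u_0$ is the $L^1_{\loc}$-limit of $\mathcal{B}u_j$-compatible functions and that $w_j = u_0$ off $U_j$ can be imposed directly in the construction by gluing with a cut-off supported in $U_j$, at the cost of enlarging $\{u_j\neq w_j\}$ by a set of measure $o(1)$. (iv) \emph{The finite-measure addendum.} If $|\Omega| < \infty$, then $|\Omega \setminus U_j| \to 0$, so $|\{u_j \neq w_j\}| \le |\{u_j \neq w_j\}\cap U_j| + |\Omega\setminus U_j| \to 0$.

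I expect the main obstacle to be step (iii): reconciling ``$w_j = u_0$ on $\Omega\setminus U_j$'' with ``$|\{u_j\neq w_j\}\cap U_j|\to 0$'' when $u_j$ does not itself converge uniformly, only in $L^1_{\loc}$. The clean way around this is to note that the set where $u_j$ differs from $u_0$ outside the modification balls can be folded into $U_j$ by slightly enlarging the exhaustion index, and that on $U_j$ the measure of $\{u_j \neq w_j\}$ already accounts for both the M\"uller-truncation balls and the transition layer; controlling the highest-order derivatives across that transition layer requires, again, the uniform bound $\|D^l u_j\|_{L^\infty(\Omega)} \le M|K|_\infty$ together with the cut-off estimates, exactly as in the proof of Lemma \ref{lemm:muellerlemma5}. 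A secondary technical point is ensuring $U_j \Subset \Omega$ and $(U_j)$ increasing, which is automatic from the exhaustion but must be stated. The rest — $W^{l,\infty}_{\loc}$ regularity of $w_j$, and that $\mathcal{B}w_j$ is close to $K$ in $L^\infty$ on all of $\Omega$ because off $U_j$ one has $\mathcal{B}w_j = \mathcal{B}u_0$ which already satisfies $\dist(\mathcal{B}u_0,K)=0$ a.e.\ (as an $L^1$-limit of $\dist(\mathcal{B}u_j,K)\to 0$) — is routine.
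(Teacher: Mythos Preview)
Your overall architecture---glue $u_j$ to $u_0$ by a cut-off supported in a compactly contained set, apply Proposition~\ref{prop:muellerprop7undcor8} (using its ``moreover'' clause to keep the modification inside $U_j$), then diagonalize over an exhaustion---is exactly the paper's route. The paper first checks $\mathcal{B}u_0\in K$ a.e.\ (note: you only have $\mathcal{B}u_j\rightharpoonup\mathcal{B}u_0$ weakly, not strongly, so this step needs Mazur's lemma plus Fatou, not just ``$L^1$-limit''), then sets $w_j=\varphi u_j+(1-\varphi)u_0$ for a cut-off $\varphi\in C_c^\infty(V)$ with $V\Subset U\Subset\Omega$, verifies $\lambda_j\to 0$ and a uniform $\|D^lw_j\|_{L^\infty}$ bound, feeds this into Proposition~\ref{prop:muellerprop7undcor8}, and diagonalizes.

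The real gap in your plan is the transition-layer estimate for $l=2$. When you compute $D^2\big(\varphi u_j+(1-\varphi)u_0\big)$ you pick up terms like $D(u_j-u_0)\otimes D\varphi$ and $(u_j-u_0)\otimes D^2\varphi$, and these are \emph{not} controlled by the hypothesis $\|D^2u_j\|_{L^\infty}\le M|K|_\infty$ alone. Your appeal to ``exactly as in the proof of Lemma~\ref{lemm:muellerlemma5}'' is precisely backwards: the whole point of the varying-radius mollification in Lemma~\ref{lemm:muellerlemma5} is that it \emph{avoids} producing such lower-order terms, whereas a standard cut-off does produce them. The paper handles this by Gagliardo--Nirenberg interpolation,
\[
\|D(u_j-u_0)\|_{L^\infty(U)}\le c\,\|D^2(u_j-u_0)\|_{L^\infty(U)}^{\frac{d+1}{d+2}}\|u_j-u_0\|_{L^1(U)}^{\frac{1}{d+2}}+C\|u_j-u_0\|_{L^1(U)},
\]
which forces $\|D(u_j-u_0)\|_{L^\infty(U)}\to 0$, and then Poincar\'e--Wirtinger to bound $\|u_j-u_0\|_{L^\infty(U)}$ uniformly. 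Without this interpolation step you can neither show $\int\dist(\mathcal{B}w_j,K)\,\dx\to 0$ nor obtain the uniform $\|D^2w_j\|_{L^\infty}$ bound required to invoke Proposition~\ref{prop:muellerprop7undcor8}.
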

	One can ask if there are quantities which are conserved under the truncation method that leads to the previous theorems. Indeed, we can show that certain convex integral functionals are preserved in the limit. In particular this holds for the $p$-norm. We only state the result in the local formulation, but this also holds for $\Omega=\R^d$.
	\begin{cor}\label{cor:convexintegralbound}
		Let $K$ be a compact, convex set in $\R^k$, let $\Omega\subset \R^d$ be open, bounded and let $l=1$ or $l=2$. Further, let $F\colon \R^k\to \R$ be a non-negative, homogeneous, convex function and let $(u_j)\in W^{l,\infty}_{\operatorname{loc}}(\Omega,\R^m)$ be a sequence of functions satisfying $\|D^lu_j\|_{L^{\infty}(\Omega)}\leq M|K|_{\infty}$ and
		\begin{align*}
			u_j&\rightarrow u_0\text{ in }L^1_{\operatorname{loc}}(\Omega,\R^m),\\
			\dist(\mathcal{B}u_j,K)&\rightarrow 0\text{ in }L^1(\Omega).
		\end{align*}
		Then there exist functions $w_j\in W^{l,\infty}_{\operatorname{loc}}(\Omega,\R^m)$ and an increasing sequence of open sets $(U_j)$, which are compactly contained in $\Omega$, such that
		\begin{align*}
			w_j&=u_0\text{ on }\Omega\backslash U_j,\\
			|\{u_j\neq w_j \}|&\rightarrow 0,\\
			\|\dist(\mathcal{B}w_j,K)\|_{L^{\infty}(\Omega)}&\rightarrow 0,\\
			\underset{j\rightarrow\infty}{\limsup}\int\limits_{\Omega}^{}F(\mathcal{B}w_j)\dx&\leq \underset{j\rightarrow\infty}{\limsup}\int\limits_{\Omega}^{}F(\mathcal{B}u_j)\dx<\infty.
		\end{align*}
	\end{cor}
	From the previous local formulation of our main result we obtain a generalization of Corollary 3 in \cite{M99} treating gradient Young measures. For the relevant definitions consult e.g.~\cite{FM99}.
	\begin{cor}\label{cor:muellercorollary3}
		Let $K$ be a compact, convex set in $\R^k$, let $\Omega\subset \R^d$ be open, bounded and let $l=1$ or $l=2$. Further, let $F\colon \R^k\to \R$ be a non-negative, homogeneous, convex function. Suppose that $(u_j)\in W^{l,\infty}_{\operatorname{loc}}(\Omega,\R^m)$ is a sequence of functions satisfying
		\begin{align*}
			\|D^lu_j\|_{L^{\infty}(\Omega)}&\leq M|K|_{\infty},\\
			u_j&\rightharpoonup u_0\text{ in }W^{l,p}_{}(\Omega,\R^m)
		\end{align*}
		for some $1\leq p\leq \infty$, where in the case $p=\infty$ the weak convergence is replaced by weak-*-convergence. Further, assume that $(\mathcal{B}u_j)$ generates a Young measure $\nu=(\nu_x)_{x\in\Omega}$ with
		\begin{align*}
			\operatorname{supp}\nu_x\subset K\text{ for a.e. }x\in\Omega.
		\end{align*}
		Then there exists a sequence of functions $(g_j)$ such that $(\mathcal{B}g_j)$ generates the Young measure $\nu$ and
		\begin{align*}
			\|\dist(\mathcal{B}g_j,K)\|_{L^{\infty}(\Omega)}&\rightarrow 0,\\
			\underset{j\rightarrow\infty}{\limsup}\int\limits_{\Omega}^{}F(\mathcal{B}g_j)\dx&\leq \underset{j\rightarrow\infty}{\limsup}\int\limits_{\Omega}^{}F(\mathcal{B}u_j)\dx<\infty.
		\end{align*}
	\end{cor}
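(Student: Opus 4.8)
The plan is to derive Corollary~\ref{cor:muellercorollary3} from the local result Theorem~\ref{theo:mueller4} by combining the uniform approximation it provides with the well-known characterization of Young measures via their generating sequences and a diagonal argument. First I would recall that, since $\Omega$ is bounded and $u_j \rightharpoonup u_0$ in $W^{l,p}$ (weakly-$*$ if $p=\infty$), after extracting a subsequence we may assume $u_j \to u_0$ in $L^1_{\loc}(\Omega,\R^m)$; indeed, for $p>1$ this is compact Sobolev embedding on bounded subdomains, and for $p=1$ it follows from the additional uniform bound $\|D^l u_j\|_{L^\infty(\Omega)} \le M|K|_\infty$ together with the fact that the lower-order derivatives are weakly convergent, hence bounded. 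Also, the assumption that $(\mathcal{B}u_j)$ generates a Young measure supported in $K$ for a.e.\ $x$ implies, by the standard fact that $\dist(\cdot,K)$ is continuous and nonnegative and that $\int_\Omega \dist(\mathcal{B}u_j,K)\,\dx$ converges to $\int_\Omega \langle \nu_x, \dist(\cdot,K)\rangle\,\dx = 0$ (using that the negative part of a continuous function is always a valid test function for Young measure convergence, or a truncation argument), that $\dist(\mathcal{B}u_j,K) \to 0$ in $L^1(\Omega)$. Thus the hypotheses of Theorem~\ref{theo:mueller4} are met.

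Next I would apply Theorem~\ref{theo:mueller4} to obtain $w_j \in W^{l,\infty}_{\loc}(\Omega,\R^m)$ with $\|\dist(\mathcal{B}w_j,K)\|_{L^\infty(\Omega)} \to 0$ and $|\{u_j \ne w_j\}| \to 0$ (the last using $|\Omega|<\infty$). The point is that modifying a generating sequence on a set of vanishing measure does not change the Young measure it generates: if $\{u_j \ne w_j\}$ has measure tending to zero, then for every bounded continuous $f$ and every $\psi \in L^\infty(\Omega)$ one has $\int_\Omega \psi\, f(\mathcal{B}w_j) - \int_\Omega \psi\, f(\mathcal{B}u_j) = \int_{\{u_j \ne w_j\}} \psi\,(f(\mathcal{B}w_j) - f(\mathcal{B}u_j)) \to 0$ by dominated convergence (here one should argue that $\mathcal{B}w_j$ is still bounded in $L^1$ so that $f$ composed with it is, after a standard truncation, handled; more carefully, Young measure generation is tested against Carathéodory integrands with the usual tightness, and $(\mathcal{B}w_j)$ inherits tightness from $(\mathcal{B}u_j)$ since the two sequences differ on a null-measure-in-the-limit set and $\mathcal{B}w_j$ takes values within $O(1)$ of $K$). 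Hence $(\mathcal{B}w_j)$ generates the same Young measure $\nu$.

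Finally, to undo the subsequence extraction and produce a single sequence $(g_j)$ indexed by the original $j$, I would use the standard subsequence principle for Young measures: the full sequence $(\mathcal{B}u_j)$ generates $\nu$ means every subsequence has a further subsequence generating $\nu$; applying the above construction to an arbitrary subsequence yields, along a further subsequence, a modified sequence generating $\nu$ with $\|\dist(\mathcal{B}\cdot,K)\|_{L^\infty}\to 0$. A diagonalization over an exhausting sequence of indices then produces $(g_j)$ with $g_j \in W^{l,\infty}_{\loc}$, generating $\nu$, and $\|\dist(\mathcal{B}g_j,K)\|_{L^\infty(\Omega)} \to 0$ along the full sequence. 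I expect the main obstacle to be the bookkeeping around Young measure convergence under modification on small sets --- specifically, verifying that the tightness/equiintegrability needed for $(\mathcal{B}w_j)$ to generate a Young measure (and the same one) genuinely transfers from $(\mathcal{B}u_j)$, given that Theorem~\ref{theo:mueller4} controls $w_j$ only in $W^{l,\infty}_{\loc}$ rather than globally; but since $\mathcal{B}w_j$ lies within a fixed neighborhood of the bounded set $K$ wherever the $L^\infty$ estimate has kicked in, and differs from $\mathcal{B}u_j$ on a vanishing set otherwise, this is a manageable, if slightly delicate, point.
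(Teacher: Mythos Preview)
Your proposal is correct and follows essentially the route the paper intends: the corollary is stated without proof as a direct consequence of Theorem~\ref{theo:mueller4}, in analogy with Corollary~3 of \cite{M99}, and your argument (verify the hypotheses of Theorem~\ref{theo:mueller4} from the Young measure assumption, then observe that modification on sets of vanishing measure preserves the generated Young measure) is exactly that analogy.

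Two minor simplifications are available. First, the subsequence extraction and final diagonalization are unnecessary: since $u_j\rightharpoonup u_0$ in $W^{l,p}(\Omega)$ and $\Omega$ is bounded, every subsequence has (by Rellich--Kondrachov on compactly contained subdomains) a further subsequence converging to $u_0$ in $L^1_{\loc}$, whence the \emph{full} sequence already satisfies $u_j\to u_0$ in $L^1_{\loc}(\Omega)$. Second, your concern about tightness when passing from $\dist(\cdot,K)$ to the Young measure limit, and about $(\mathcal{B}w_j)$ generating a Young measure, dissolves once you note that $\|D^lu_j\|_{L^\infty}\le M|K|_\infty$ forces $\|\mathcal{B}u_j\|_{L^\infty}\le C(\mathcal{B})M|K|_\infty$, so $(\mathcal{B}u_j)$ is uniformly bounded and one may replace $\dist(\cdot,K)$ by a bounded continuous truncation without loss; likewise $\mathcal{B}w_j$ lies in a fixed neighbourhood of $K$ for large $j$ by the conclusion of Theorem~\ref{theo:mueller4}, so both sequences are uniformly bounded and the Young measure bookkeeping is straightforward.
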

	Corollary \ref{cor:convexintegralbound}, and hence its Young measure version, can be formulated also for sets $K$ that vary uniformly continuously with respect to the Hausdorff distance $d_H$ as defined in (\ref{eq:kuniformlycont}) below. More precisely, we have the following:
	\begin{cor}\label{cor:varyingk}
		Let $\Omega\subset \R^d$ be open, bounded and let $l=1$ or $l=2$. Let $F\colon \R^k\to \R$ be a non-negative, homogeneous, convex function and let $(K_x)_{x\in\Omega}$ be a family of compact, convex sets in $\R^k$ with $|K_x|_{\infty}\geq \eta>0$ for all $x\in\Omega$ and some fixed $\eta>0$. Further, assume that $(K_x)$ has the property that for all $\eps>0$ there exists $\delta>0$ such that for all $x_0\in \Omega$
		\begin{align}
			 d_H(K_x,K_{x_0}):=\max\left\{\underset{A\in K_x}{\max}\dist(A,K_{x_0}),\underset{B\in K_{x_0}}{\max}\dist(B,K_x) \right\}\leq \eps\text{\ \ if\ \ }|x-x_0|\leq \delta.\label{eq:kuniformlycont}
		\end{align}
		Further, let $(u_j)\in W^{l,\infty}_{\operatorname{loc}}(\Omega,\R^m)$ be a sequence of functions satisfying $\|D^lu_j\|_{L^{\infty}(\Omega)}\leq M$ and
		\begin{align*}
			u_j&\rightarrow u_0\text{ in }L^1_{\operatorname{loc}}(\Omega,\R^m),\\
			(x\mapsto\dist(\mathcal{B}u_j(x),K_x))&\rightarrow 0\text{ in }L^1(\Omega).
		\end{align*}
		Then there exist functions $w_j\in W^{l,\infty}_{}(\Omega,\R^m)$ such that
		\begin{align*}
			|\{u_j\neq w_j \}|&\rightarrow 0,\\
			\|(x\mapsto\dist(\mathcal{B}w_j(x),K_x)\|_{L^{\infty}(\Omega)}&\rightarrow 0,\\
			\underset{j\rightarrow\infty}{\limsup}\int\limits_{\Omega}^{}F(\mathcal{B}w_j)\dx&\leq \underset{j\rightarrow\infty}{\limsup}\int\limits_{\Omega}^{}F(\mathcal{B}u_j)\dx<\infty.
		\end{align*}
	\end{cor}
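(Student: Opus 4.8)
The plan is to reduce Corollary \ref{cor:varyingk} to the fixed-target local result Theorem \ref{theo:mueller4} by a covering-and-freezing argument. Fix $\eps>0$ and choose $\delta>0$ as in \eqref{eq:kuniformlycont}. Since $\Omega$ is bounded, cover $\cl{\Omega}$ by finitely many open balls $B_1,\dots,B_N$ of radius $\delta/2$ with centers $x_1,\dots,x_N\in\cl\Omega$, and set $\Omega_i:=B_i\cap\Omega$. On each $\Omega_i$ the hypothesis gives $K_x\subset (K_{x_i})_\eps$ and $K_{x_i}\subset (K_x)_\eps$ for all $x\in\Omega_i$; consequently $\dist(\mathcal{B}u_j(x),(K_{x_i})_\eps)\le \dist(\mathcal{B}u_j(x),K_x)+\eps\cdot\mathbbm{1}$-type bookkeeping, so that $\dist(\mathcal{B}u_j,(K_{x_i})_\eps)\to 0$ in $L^1(\Omega_i)$. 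Apply Theorem \ref{theo:mueller4} on each $\Omega_i$ with the fixed convex body $(K_{x_i})_\eps$, using the same subsolution sequence $(u_j)$ and the bound $\|D^lu_j\|_{L^\infty}\le M\le (M/\eta)\,|(K_{x_i})_\eps|_\infty$ (here the lower bound $|K_x|_\infty\ge\eta$ enters to match the normalization $\|D^lu_j\|_{L^\infty}\le M'|K|_\infty$ required there, with $M':=M/\eta$). This produces, for each $i$, modified functions and exhaustions with $\|\dist(\mathcal{B}w_j^{(i)},(K_{x_i})_\eps)\|_{L^\infty(\Omega_i)}\to 0$ and $|\{u_j\ne w_j^{(i)}\}|\to 0$; since $(K_{x_i})_\eps\subset (K_x)_{2\eps}$ for $x\in\Omega_i$, one gets $\dist(\mathcal{B}w_j^{(i)}(x),K_x)\le 2\eps+o(1)$ uniformly on $\Omega_i$.

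The next step is to glue the local corrections across the overlapping pieces $\Omega_i$ into a single $w_j$ on $\Omega$. I would use a smooth partition of unity $(\varphi_i)$ subordinate to $(B_i)$ and a standard induction over $i$: at stage $i$ one already has a function that equals $u_0$ outside a small set and is $3\eps$-close (in $\mathcal{B}$) to $(K_x)$ on $\Omega_1\cup\dots\cup\Omega_{i-1}$; one then applies the $\Omega_i$-result to interpolate between this function and $w_j^{(i)}$, relying on the fact that both are already $C\eps$-close to the relevant (slightly fattened) target and differ from $u_0$ only on small sets, so convex combinations stay $O(\eps)$-close thanks to convexity of the $K_x$. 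Because $N$ is finite (depending on $\eps$ but not on $j$), the total exceptional set $\{u_j\ne w_j\}$ has measure $\le \sum_i o_j(1)\to 0$, and the uniform estimate degrades by only a fixed multiple of $\eps$: $\limsup_j\|\dist(\mathcal{B}w_j,K_\bullet)\|_{L^\infty(\Omega)}\le C\eps$.

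Finally I would run a diagonal argument over $\eps=1/n$. For each $n$ the above yields a sequence $(w_j^{(n)})_j$ with $|\{u_j\ne w_j^{(n)}\}|\to 0$ as $j\to\infty$ and $\limsup_j\|\dist(\mathcal{B}w_j^{(n)},K_\bullet)\|_{L^\infty}\le C/n$. Pick $j_n\uparrow\infty$ so fast that $|\{u_{j}\ne w_{j}^{(n)}\}|\le 1/n$ and $\|\dist(\mathcal{B}w_{j}^{(n)},K_\bullet)\|_{L^\infty}\le 2C/n$ for all $j\ge j_n$, and define $w_j:=w_j^{(n)}$ for $j_n\le j<j_{n+1}$. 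Then $w_j\in W^{l,\infty}(\Omega,\R^m)$ (the $\Omega_i$ exhaust the bounded set $\Omega$ up to a null set, and the uniform $\mathcal{B}$-bound together with $\|D^lu_j\|_\infty\le M$ gives full $W^{l,\infty}$ control), $|\{u_j\ne w_j\}|\to 0$, and $\|\dist(\mathcal{B}w_j(\cdot),K_\cdot)\|_{L^\infty(\Omega)}\to 0$.

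I expect the main obstacle to be the gluing step: keeping the uniform distance bound under interpolation across the finitely many overlaps without reintroducing lower-order error terms. The point is that Theorem \ref{theo:mueller4} itself already performs such a localized interpolation (it leaves $w_j=u_0$ outside $U_j$), so the cleanest route is probably not a separate partition-of-unity gluing but rather applying Theorem \ref{theo:mueller4} successively on $\Omega_1$, then on $\Omega_2$ to the output of stage $1$, and so on — at each stage the current function plays the role of ``$u_j$'' and ``$u_0$'' is updated accordingly — so that the machinery of Theorem \ref{theo:mueller4} handles the interpolation internally and one only needs that the frozen targets $(K_{x_i})_\eps$ are mutually $O(\eps)$-compatible on overlaps, which \eqref{eq:kuniformlycont} guarantees.
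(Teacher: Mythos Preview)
Your high-level plan---freeze the target on small pieces, apply Theorem~\ref{theo:mueller4} on each piece, then diagonalize over the freezing scale---is exactly the paper's strategy. The place where your argument diverges, and where it has a genuine gap, is the gluing.

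You cover $\Omega$ by \emph{overlapping} balls and then try to patch the local outputs $w_j^{(i)}$ together. Neither of your two proposed mechanisms works as stated. A partition-of-unity combination $\sum_i\varphi_i w_j^{(i)}$ produces, after applying $\mathcal{B}$, commutator terms of the form $B^{\alpha}\,\partial_{\alpha}\varphi_i\cdot\bigl(w_j^{(i)}-w_j^{(i')}\bigr)$ (and, for $l=2$, also $D\varphi_i\cdot D\bigl(w_j^{(i)}-w_j^{(i')}\bigr)$). To keep $\dist(\mathcal{B}w_j,K_\bullet)$ small in $L^\infty$ you would need $w_j^{(i)}-w_j^{(i')}$ (and its gradient, for $l=2$) to be small in $L^\infty$ on the overlap, but Theorem~\ref{theo:mueller4} only tells you that each $w_j^{(i)}$ coincides with $u_j$ \emph{off a small-measure set}; there is no uniform closeness of the different local corrections to one another. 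Your sequential alternative has the dual problem: if after stage~$1$ you extend by $u_0$ to the rest of $\Omega$, then on $\Omega_2\setminus\Omega_1$ the running sequence equals $u_0$, not $u_j$, and the final function will differ from $u_j$ on a set of fixed positive measure, killing $|\{u_j\neq w_j\}|\to 0$; if instead you extend by $u_j$, the extension is not $W^{l,\infty}$ across $\partial\Omega_1$ because Theorem~\ref{theo:mueller4} forces $w_j^{(1)}=u_0$ there.

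The paper sidesteps all of this by using a partition of $\Omega$ into essentially \emph{disjoint} open dyadic cubes (intersected with $\Omega$) of diameter below the uniform-continuity scale. On each cube one applies Theorem~\ref{theo:mueller4} with the frozen target; the crucial feature is that Theorem~\ref{theo:mueller4} returns $w_j^{i,n}=u_0$ on a neighbourhood of the cube boundary. Hence neighbouring pieces agree (both equal $u_0$) near their common faces and assemble into a single $w_j^i\in W^{l,\infty}(\Omega)$ with no interpolation whatsoever. After that the diagonal argument over the mesh size $i$ is exactly as you describe. So the missing idea is simply: replace the overlapping ball cover by a disjoint cube decomposition, and let the boundary clause of Theorem~\ref{theo:mueller4} do the gluing for you.
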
	
	\begin{proof}
		Observe that convexity and compactness of $K_x$ imply
		\begin{align}
			\dist(\mathcal{B}u_j(x),K_{x_0})\leq \dist(\mathcal{B}u_j(x),K_x)+d_H(K_x,K_{x_0})\label{eq:distestimate}
		\end{align}
		for all $x,x_0\in\Omega$. We introduce a segmentation of $\R^d$ by open cubes $(\tilde{Q}_1^n)_{n\in\N}$ of side-length one with corners lying in $\Z^d$. Then the next family of cubes $\left(\tilde{Q}_{\frac{1}{2}}^n\right)$ arises from the former by bisecting the edges. So, one cube $\tilde{Q}^n_1$ contains $2^d$ cubes of the next generation. Continuing leads to a dyadic segmentation into cubes. Note that for all $N\in\N$ we have $\R^d\backslash \left(\underset{n\in\N}{\bigcup}\tilde{Q}_{2^{-N}}^n\right)$ is a null set.\\
		By (\ref{eq:kuniformlycont}) for all $i\in \N$ choose $N_i\in \N$ such that $d_H(K_x,K_y)\leq \frac{1}{i}$ for all $x,y\in \Omega$ with $|x-y|\leq \sqrt{d}2^{-N_i}$. Moreover, for all $n\in \N$ choose $x^n_i$ as the center point of $\tilde{Q}_{2^{-N_i}}^n$. Now change the families of cubes to $\left(Q_{2^{-N_i}}^n\right):=\left(\tilde{Q}_{2^{-N_i}}^n\cap \Omega\right)$ and change the points $x_i^n$ (without renaming) to an arbitrary point in $Q_{2^{-N_i}}^n$ if $x_i^n\notin \Omega$. Thus, for all cubes $Q_{2^{-N_i}}^n$ it holds that $u_j\big|_{Q_{2^{-N_i}}^n}\in W^{l,\infty}\left(Q_{2^{-N_i}}^n,\R^m\right)$ with
		\begin{align*}
			\|D^lu_j\|_{L^{\infty}\left(Q_{2^{-N_i}}^n\right)}\leq M\leq\frac{M}{\eta}|K_{x^n_i}|_{\infty}.
		\end{align*}
		We also have by (\ref{eq:distestimate})
		\begin{align*}
			\int\limits_{Q_{2^{-N_i}}^n}^{}\dist(\mathcal{B}u_j(x),K_{x_i^n})\dx\leq \int\limits_{Q_{2^{-N_i}}^n}^{}\dist(\mathcal{B}u_j(x),K_x)+d_H(K_x,K_{x_i^n})\dx\leq \int\limits_{Q_{2^{-N_i}}^n}^{}\dist(\mathcal{B}u_j(x),K_x)\dx+|\Omega|\frac{1}{i}.
		\end{align*}
		Hence,
		\begin{align*}
			\dist\left(\mathcal{B}u_j(\cdot),(K_{x_i^n})_{|\Omega|\frac{1}{i}}\right)\rightarrow 0\text{ in }L^1\left(Q_{2^{-N_i}}^n\right).
		\end{align*}
		The $L^1\left(Q_{2^{-N_i}}^n\right)$-convergence of $u_j$ to $u_0$ is clear.\\
		Thus, on every $Q_{2^{-N_i}}^n$ we apply Corollary  \ref{cor:convexintegralbound} to obtain a sequence $w_j^{i,n}\in W^{l,\infty}_{}\left(Q_{2^{-N_i}}^n,\R^m\right)$ with
		\begin{align}
			\left|\{u_j\neq w_j^{i,n} \}\cap Q_{2^{-N_i}}^n\right|&\overset{j\rightarrow\infty}{\rightarrow} 0,\nonumber\\
			\left\|\dist\left(\mathcal{B}w_j^{i,n},(K_{x_i^n})_{|\Omega|\frac{1}{i}}\right)\right\|_{L^{\infty}\left(Q_{2^{-N_i}}^n\right)}&\overset{j\rightarrow\infty}{\rightarrow} 0,\nonumber\\
			\underset{j\rightarrow \infty}{\limsup}\int\limits_{Q^n_{2^{-N_i}}}^{}F\left(\mathcal{B}w_j^{i,n} \right)\dx &\leq \underset{j\rightarrow \infty}{\limsup}\int\limits_{Q^n_{2^{-N_i}}}^{}F\left(\mathcal{B}u_j \right)\dx.\label{eq:wrongintegralbound}
		\end{align}
		Moreover, for neighboring cubes the so constructed sequences agree near the common boundary and equal $u_0$ there. Hence, the ensemble $w_j^{i,n}$ defines for fixed $i,j$ a function $w_j^i\in W^{l,\infty}(\Omega,\R^m)$.\\
		As $\Omega$ is bounded we only have finitely many $Q_{2^{-N_i}}^n$ for each fixed $i$. So, for all $i\in \N$ there exists $j_i\in\N$ such that $|\{u_j\neq w_j^i \}|\leq \frac{1}{i}$ and $\left\|\dist\left(\mathcal{B}w_j^{i},(K_{x_i^n})_{|\Omega|\frac{1}{i}}\right)\right\|_{L^{\infty}\left(Q_{2^{-N_i}}^n\right)}\leq \frac{1}{i}$ for every $n$ and for all $j\geq j_i$. The estimate in (\ref{eq:wrongintegralbound}) does not lead to an estimate for integral bounds on $\Omega$, as the limit superior on the right-hand side may not commute with the arising sum over $n$. But luckily in the proof of Corollary \ref{cor:convexintegralbound} below we will establish the inequality (\ref{eq:integralboundfinal}) which compares the integrals for every $j$ and not just in the limit superior. The latter estimate shows that we can in fact choose $j_i$ such that also
		\begin{align*}
			\int\limits_{Q^n_{2^{-N_i}}}^{}F\left(\mathcal{B}w_{j}^{i,n} \right)\dx\leq e^{\frac{1}{i}}\int\limits_{Q^n_{2^{-N_i}}}^{}F\left(\mathcal{B}u_j \right)\dx+\frac{1}{i}\left|Q^n_{2^{-N_i}}\right|
		\end{align*}
		holds for all $n$ and for all $j\geq j_i$.\\
		Without loss of generality we may assume that $j_i$ is strictly increasing. For all $i\in \N$ we define $w_j:=w_j^i$ for $j_i\leq j<j_{i+1}$ and $w_j=u_0$ for $j<j_1$. Then
		\begin{align*}
			|\{u_j\neq w_j\}|&\overset{j\rightarrow\infty}{\rightarrow} 0,
		\end{align*}
		and for all $j\in [j_i,j_{i+1})$ and almost every $x\in \Omega$ it holds that $x\in Q_{2^{-N_i}}^n$ for some $n$ and
		\begin{align*}
			\dist(\mathcal{B}w_j(x),K_x)\leq \dist\left(\mathcal{B}w_j(x),(K_{x_i^n})_{|\Omega|\frac{1}{i}}\right)+d_H\left(K_x,(K_{x_i^n})_{|\Omega|\frac{1}{i}}\right)\leq (2+|\Omega|)\frac{1}{i}.
		\end{align*}
		For the integral bound it holds that
		\begin{align*}
			\int\limits_{\Omega}^{}F(\mathcal{B}w_j)\dx\leq e^{\frac{1}{i}}\int\limits_{\Omega}^{}F(\mathcal{B}u_j)\dx+|\Omega|\frac{1}{i}
		\end{align*}
		for all $j\in [j_i,j_{i+1})$. As $i$ grows with $j$, this finishes the proof.
	\end{proof}
	\begin{rem}
		One can generalize the results of this paper to sequences of the form $(f+\mathcal{B}u_j)$, where $(u_j)$ is as before and $f$ is locally Lipschitz on $\Omega$ with $\mathcal{A}f=0$. Here $\mathcal{B}$ has constant rank and $\mathcal{A}$ is a constant rank homogeneous differential operator with $\ker(\mathcal{A})=\operatorname{im}(\mathcal{B})$. In this case the constructed sequence is then also of the form $(f+\mathcal{B}w_j)$. For this one needs to assume a bound on the radius in Lemma \ref{lemm:muellerlemma5} below, which has to be dealt with in the succeeding auxiliary results. Although this requires some effort, no substantial new ideas come in. Note also that in view of Lemma 5 in \cite{R18} the function $f$ can be rewritten as $f=\mathcal{B}\varphi$ if $f$ is a function of high regularity on the whole space $\R^d$ or has zero mean on the torus. So, this boils down to the situation discussed in Theorem \ref{theo:mueller4}.
	\end{rem}
	\section{Auxiliary results and proof of the main theorems}\label{sect:proofsinrd}
	This section is dedicated to the presentation of a detailed proof of our main result.
	\begin{lem}\label{lemm:muellerlemma5}
		Let $u\in W^{l,\infty}(B_r(a),\R^m)$ with $\|D^lu\|_{L^{\infty}(B_r(a))}\leq M|K|_{\infty}$ and
		\begin{align*}
			\theta\geq \frac{1}{|K|_{\infty}}\underset{B_r(a)}{\dashint}\dist(\mathcal{B}u,K)\dx
		\end{align*}
		for some $\theta<90^{-(d+1)}$. Define $\gamma:=\theta^{\frac{1}{1+d}}(1+C_1M)|K|_{\infty}$, where $C_1$ is a fixed constant that only depends on $\mathcal{B},d$.\\
		Then there exists $\tilde{u}\in W^{l,\infty}(B_r(a),\R^m)$ such that $u=\tilde{u}$ on $B_r(a)\backslash B_{\frac{7}{8}r}(a)$ and
		\begin{align*}
			\int\limits_{B_r(a)}^{}\dist(\mathcal{B}\tilde{u},K_{\gamma})\dx\leq \left(1+10\cdot\theta^{\frac{1}{1+d}}\right)\int\limits_{B_r(a)\backslash B_{\frac{r}{2}}(a)}^{}\dist(\mathcal{B}u,K)\dx.
		\end{align*}
	\end{lem}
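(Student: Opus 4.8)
The strategy is to implement the variable-radius mollification described in the introduction and to track the three error contributions it produces: the interior mollification error, the gradient-type correction coming from differentiating the varying radius, and the untouched exterior part. First I would fix a smooth profile $\rho\in C^\infty_c(B_1(0))$ with $\int\rho=1$ and a smooth cutoff radius function $r(x)=r(|x-a|)$ on $B_r(a)$ that equals some fixed small multiple of $r$ on $B_{r/2}(a)$, decreases monotonically on the annulus $B_{3r/4}(a)\setminus B_{r/2}(a)$, and is identically zero on $B_r(a)\setminus B_{7r/8}(a)$; for $\epsilon:=r(x)$ I set $\tilde u(x):=(u*\rho_\epsilon)(x)=\int \rho_\epsilon(x-y)u(y)\,\dd y$ when $r(x)>0$ and $\tilde u(x):=u(x)$ otherwise, where $\rho_\epsilon(z)=\epsilon^{-d}\rho(z/\epsilon)$. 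By the Lebesgue differentiation theorem $\tilde u$ is continuous across $\{r(x)=0\}$ and lies in $W^{l,\infty}$; on $B_r(a)\setminus B_{7r/8}(a)$ it coincides with $u$, giving the boundary-matching claim.

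The heart of the argument is the pointwise estimate for $\mathcal B\tilde u(x)$. Since $\mathcal B$ is homogeneous of order $l$ and the mollification commutes only up to the variation of $\epsilon=r(x)$, I would write $\mathcal B\tilde u(x)$ as the sum of $(\mathcal B u)*\rho_{r(x)}(x)$ plus remainder terms that each carry at least one derivative hitting the $x$-dependence of $r(x)$. The key point, exactly as stressed in the introduction, is that after expanding $\partial_\alpha$ on $\tilde u$ no term contains a zeroth-order factor of $u$: every surviving term is either $(D^l u)$ mollified (bounded by $M|K|_\infty$) or a first-order difference of $u$ against its mollified value, i.e.\ expressions like $\int\rho_\epsilon(x-y)\,(u(x)-u(y))\,\dd y$ times derivatives of $r$; these are controlled by $\|D^1 u\|\cdot r(x)$, and since $r(x)\lesssim r$ and $|\nabla r(x)|\lesssim 1$ the whole remainder is bounded by $C_1 M|K|_\infty$. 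For the $l=2$ case one uses the fundamental theorem of calculus to write $u(x)-u(y)=\int_0^1 Du(y+t(x-y))(x-y)\,\dd t$, so that the $D^2u$ bound yields control of $Du$ up to an additive constant and hence control of these first-order differences — this is the step that genuinely needs $\|D^2u\|_{L^\infty}\le M|K|_\infty$ rather than just $\|\mathcal Bu\|$. Thus for a.e.\ $x$,
\begin{align*}
\dist(\mathcal B\tilde u(x),K)\le \bigl((\dist(\mathcal Bu,\cdot\,)\ast\rho_{r(x)})(x) \text{-type term}\bigr)+C_1 M|K|_\infty,
\end{align*}
and after absorbing the additive error into the $\gamma$-neighbourhood, $\dist(\mathcal B\tilde u(x),K_\gamma)\le \bigl(\dist(\mathcal Bu,K)\bigr)*\rho_{r(x)}(x)$ on the mollified region, while on $B_r(a)\setminus B_{7r/8}(a)$ it is just $\dist(\mathcal Bu,K)$. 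Here $\gamma=\theta^{1/(1+d)}(1+C_1M)|K|_\infty$ enters because $\theta^{1/(1+d)}|K|_\infty$ is the scale at which $\dist(\mathcal Bu,K)$ is typically as large as its average times a geometric factor on the small core ball $B_{r/2}(a)$ — the $(1+d)$-th root is the standard exponent converting an $L^1$-mean over $B_r$ into a radius so that the core ball $B_{r/2}(a)$, where the mollification radius is comparable to its own size, carries only a $\theta^{1/(1+d)}$-fraction of the mass.

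To finish I would integrate this pointwise bound over $B_r(a)$. The contribution from $B_r(a)\setminus B_{r/2}(a)$ is at most $\int_{B_r(a)\setminus B_{r/2}(a)}\dist(\mathcal Bu,K)\,\dd x$ directly (mollification with a radius supported within the annulus, plus the untouched outer shell, moves mass by at most a factor controlled by $\theta^{1/(1+d)}$ since $r(x)$ is a small multiple of $r$ there — this gives the $2\theta^{1/(1+d)}$ overhead). The contribution from the core $B_{r/2}(a)$, where the mollification radius is a fixed small multiple $c_0 r$, is bounded by Fubini: $\int_{B_{r/2}(a)}(\dist(\mathcal Bu,K)*\rho_{r(x)})(x)\,\dd x\le \sup_x\|\rho_{r(x)}\|_{L^1}\int_{B_{(1/2+c_0)r}(a)}\dist(\mathcal Bu,K)\,\dd x$; using the hypothesis $\dashint_{B_r(a)}\dist(\mathcal Bu,K)\le \theta|K|_\infty$ and $\theta<10^{-(d+1)}$, this whole core contribution is at most $2\theta^{1/(1+d)}\int_{B_r(a)\setminus B_{r/2}(a)}\dist(\mathcal Bu,K)\,\dd x$ — this is the one place where I expect to have to be careful, because one must play off the $L^1$-smallness ($\theta|K|_\infty$ times $|B_r|$) against a possible concentration of $\dist(\mathcal Bu,K)$ near the centre, and the resolution is precisely the choice of $c_0$ and the exponent $1/(1+d)$: the ball $B_{(1/2+c_0)r}(a)$ is contained in $B_r(a)$ but one needs the mass it sees to be comparable to the annular mass, which forces the core mollification radius and the profile of $r(x)$ to be chosen so that the "bad" region is thin. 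Summing the two contributions gives the claimed inequality with constant $1+2\theta^{1/(1+d)}$. The main obstacle, then, is not the mollification bookkeeping (routine once the no-zeroth-order-term observation is in place) but the measure-theoretic estimate on the core ball, where one must quantitatively exclude concentration using only the average hypothesis and the freedom in designing $r(x)$.
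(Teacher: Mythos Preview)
Your overall architecture is right (variable-radius averaging, transition annulus, core), but two genuine gaps would prevent the argument from closing.

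\textbf{The remainder is not small enough to be absorbed into $K_\gamma$.} You bound the correction terms by $C_1M|K|_\infty$, using only $r(x)\lesssim r$ and $|\nabla r(x)|\lesssim 1$. But $\gamma=\theta^{1/(1+d)}(1+C_1M)|K|_\infty$ is much smaller than $C_1M|K|_\infty$, so you cannot ``absorb the additive error into the $\gamma$-neighbourhood''. What is actually needed is that the \emph{derivatives} of the radius function are of order $\theta^{1/(1+d)}$, not merely $O(1)$: in the paper the radius profile $\rho$ satisfies $\rho\le\eps$, $|D\rho|\le 9\eps$, $|D^2\rho|\le 65\eps$ with $\eps:=\theta^{1/(1+d)}$, and every remainder term then carries a factor $\eps$. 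That is why the ``fixed small multiple of $r$'' in your construction must be $\eps r$ with $\eps=\theta^{1/(1+d)}$, not a $\theta$-independent constant $c_0$; only then does the remainder become $C_1M\eps|K|_\infty$ and fit, together with the core bound $\theta/\eps^d=\eps$, exactly into $\gamma=(1+C_1M)\eps|K|_\infty$.

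\textbf{The core estimate is the wrong mechanism.} You try to show that the integral of $\dist(\mathcal{B}\tilde u,K_\gamma)$ over the core $B_{r/2}(a)$ is bounded by $2\theta^{1/(1+d)}$ times the annulus integral. This cannot work: if $\dist(\mathcal{B}u,K)$ is supported near the centre, the annulus integral is zero while your mollified core integral is positive, and no choice of $c_0$ or profile rescues that comparison. The paper's point is different and much simpler: on the inner region where the radius is constant equal to $\eps$, convexity gives the \emph{pointwise} bound
\[
\dist(\mathcal{B}\tilde u(x),K)\;\le\;\dashint_{B_\eps(x)}\dist(\mathcal{B}u,K)\;\le\;\eps^{-d}\dashint_{B_r(a)}\dist(\mathcal{B}u,K)\;\le\;\frac{\theta}{\eps^d}|K|_\infty\;=\;\eps|K|_\infty\;<\;\gamma,
\]
so $\dist(\mathcal{B}\tilde u,K_\gamma)=0$ identically there. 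The core contributes nothing; the entire right-hand side comes from the transition annulus via the change of variables $z=x+\rho(x)y$, whose Jacobian $\ge 1-9\eps$ produces the factor $(1+2\eps)=(1+2\theta^{1/(1+d)})$ and whose image lies in $B_r(a)\setminus B_{r/2}(a)$. This is also why the exponent is $\tfrac{1}{d+1}$: it is exactly the choice making $\theta/\eps^d=\eps$, not a measure-theoretic concentration argument.
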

	Moreover,
	\begin{align*}
		\|D^l\tilde{u}\|_{L^{\infty}(B_r(a))}\leq \left(1+C_1\theta^{\frac{1}{d+1}}\right)|K|_{\infty}M.
	\end{align*}
	\begin{proof}
		We follow the basic strategy of proving Lemma 5 in \cite{M99}. But note that our regularization of the function $u$ will be different, which enables us to go to order $l=2$ under the additional assumption $\|D^lu\|_{L^{\infty}(B_r(a))}\leq M|K|_{\infty}$.\\
		Using the rescaling
		\begin{align*}
			u(\cdot)\rightarrow \frac{1}{|K|_{\infty}}\frac{1}{r^l}u(r(\cdot+a)),\ \tilde{u}(\cdot)\rightarrow \frac{1}{|K|_{\infty}}\frac{1}{r^l}\tilde{u}(r(\cdot+a)),\ K\rightarrow \frac{K}{|K|_{\infty}},\ \gamma\rightarrow \frac{\gamma}{|K|_{\infty}}
		\end{align*}
		we assume that $|K|_{\infty}=1$, $a=0$, and $r=1$. Note that this rescaling argument is the reason why we may only assume a uniform bound on the highest derivative.\\
		In the following we use the notation $B:=B_{1}(0)$ and $B_{t}:=B_{t}(0)$ for all $t>0$. Define the function
		\begin{align*}
			\tilde{u}(x):=\begin{cases}
			\underset{B_{\rho(x)}(x)}{\dashint}u(y)\dy,\,& x\in B_{\frac{7}{8}}\\
			u(x),& x\in B\backslash B_{\frac{7}{8}}
			\end{cases}=\begin{cases}
			\underset{B}{\dashint}u(x+\rho(x)y)\dy,\,& x\in B_{\frac{7}{8}}\\
			u(x),& x\in B\backslash B_{\frac{7}{8}}
			\end{cases},
		\end{align*}
		where $\rho\in C^{\infty}(B,\R) $ is a radially symmetric function such that
		\begin{align*}
			\rho(x)&=0,\,\forall x\in B\backslash B_{\frac{7}{8}},\\
			0< \rho(x)&\leq \varepsilon,\,\forall x\in B_{\frac{7}{8}},\\
			\rho(x)&=\eps,\,\forall x\in B_{\frac{5}{8}},\\
			|D\rho(x)|&\leq 9\eps,\\
			|D^2\rho(x)|&\leq 65\eps,
		\end{align*}
		where $\eps<\frac{1}{90}$ will be chosen later.\\
		Consider for every $y\in B$ the map
		\begin{align*}
			x\mapsto \varphi_y(x):=x+\rho(x)y.
		\end{align*}
		Since
		\begin{align}
			|D\varphi_y|(x)=|\mathds{1}+D\rho(x)\otimes y|\geq |\det(\mathds{1})(1+y^{\operatorname{T}}\cdot D\rho(x))|\geq 1-|y||D\rho(x)|\geq 1-9\eps>\frac{9}{10}\label{eq:varphicalc}
		\end{align}
		for all $x\in \R^d$ and all fixed $y\in B$, the map $\varphi_y$ is a local diffeomorphism. Noticing that $|\varphi_y(x)|\rightarrow \infty$ as $|x|\rightarrow \infty$, we infer by Hadamard's theorem that we even have a global smooth inverse $\varphi_y^{-1}$.\\
		For all fixed $y\in B$ the map $x\mapsto u(\varphi_y(x))$ is differentiable a.e. as $u$ is differentiable almost everywhere. In particular, for fixed $y$ and for a.e. $x\in B$ one calculates
		\begin{align*}
			\partial_{x_i}u(x+\rho(x)y)=(Du)(x+\rho(x)y)\cdot(e_i+y\partial_i\rho(x)), 
		\end{align*}
		with $i=1,...,d$. In the case $l=1$ observe that $u\in W^{1,\infty}(B)\hookrightarrow H^1(B)$, hence so is $x\mapsto u(\varphi_y(x))$. Thus, by the Divergence Theorem and Fubini's theorem we obtain for all $\psi\in C_c^{\infty}(B)$
		\begin{align*}
			-\int\limits_{\R^d}^{}\underset{B_{\rho(x)}(x)}{\dashint}u(y)\dy\, \partial_{x_i}\psi(x)\dx=\int\limits_{\R^d}^{}\underset{B}{\dashint}(Du)(x+\rho(x)y)\cdot(e_i+y\partial_i\rho(x))\dy\,\psi(x)\dx.
		\end{align*}
		In the case $l=2$ we argue similarly but with $x\mapsto u(\varphi_y(x))$ replaced by $x\mapsto (Du)(\varphi_y(x))\cdot(e_i+y\partial_i\rho(x))$. Hence, for fixed $y$ and a.e. $x$ it holds that
		\begin{align*}
			&\partial_{x_j}\big((Du)(x+\rho(x)y)\cdot(e_i+y\partial_i\rho(x))\big)\\
			=&(e_j+y\partial_j\rho(x))^{\operatorname{T}}\cdot (D^2u)(x+\rho(x)y)\cdot (e_i+y\partial_i\rho(x))+(Du)(x+\rho(x)y)\cdot y\partial_{i,j}\rho(x),
		\end{align*}		
		for $i,j=1,...,d$, and calculating the second order weak derivatives yields for all $\psi\in C_c^{\infty}(B)$
		\begin{align*}
			&\int\limits_{\R^d}^{}\underset{B_{\rho(x)}(x)}{\dashint}u(y)\dy\,\partial_{x_i}\partial_{x_j}\psi(x)\dx\\
			=&\int\limits_{\R^d}^{}\underset{B}{\dashint}(e_j+y\partial_j\rho(x))^{\operatorname{T}}\cdot (D^2u)(x+\rho(x)y)\cdot (e_i+y\partial_i\rho(x))+(Du)(x+\rho(x)y)\cdot y\partial_{i,j}\rho(x)\dy\,\psi(x)\dx.
		\end{align*}
		Now, for $x\in B_{\frac{5}{8}}$ we estimate 
		\begin{align}	
			\dist(\mathcal{B}\tilde{u},K)\leq \underset{B_{\eps}(x)}{\dashint}\dist(\mathcal{B}u(y),K)\dy\leq \frac{1}{\eps^{d}}\underset{B}{\dashint}\dist(\mathcal{B}u(y),K)\dy\leq \frac{1}{\eps^{d}}\theta.\label{eq:innerestimate}
		\end{align}
		\textbf{Claim:} It holds that
		\begin{align*}
			\int\limits_{B_{\frac{7}{8}}\backslash B_{\frac{5}{8}}^{}}\dist(\mathcal{B}\tilde{u}(x),K_{\gamma})\dx\leq (1+10\eps)\int\limits_{B_{\frac{7}{8}}\backslash B_{\frac{1}{2}}^{}}\dist(\mathcal{B}u(z),K)\dz.
		\end{align*}
		In order to prove this claim observe that $\rho(x)>0$ for all $x\in B_{\frac{7}{8}}\backslash B_{\frac{5}{8}}$, thus $y\mapsto x+\rho(x)y$ is a diffeomorphism from $B$ to $B_{\rho(x)}(x)$. Now we need to separate the cases $l=1$ and $l=2$ again. For $l=1$ and $x\in B_{\frac{7}{8}}\backslash B_{\frac{5}{8}}$ we estimate for $i=1,...,d$
		\begin{align*}
			\left|\underset{B}{\dashint}(Du)(x+\rho(x)y)\cdot y\partial_i\rho(x)\dy\right|\leq \underset{B}{\dashint}9M\varepsilon|y|\dy\leq 9M\varepsilon.
		\end{align*}
		Choosing $\eps:=\theta^{\frac{1}{1+d}}$ and $C_1:=9\sum\limits_{i=1}^{d}|B^i|$ yields
		\begin{align*}
			\frac{\theta}{\eps^{d}}+\sum\limits_{i=1}^{d}|B^{i}|9M\eps=(1+C_1M)\eps=\gamma.
		\end{align*}
		Since $K$ is convex, the distance function $z\mapsto \dist(z,K)$ is convex. Moreover, $\dist(z+w,K)\leq \dist(z,K)+|w|$ for all $z,w\in \R^m$. Hence, we estimate
		\begin{align*}
			\int\limits_{B_{\frac{7}{8}}\backslash B_{\frac{5}{8}}^{}}\dist(\mathcal{B}\tilde{u}(x),K_{\gamma})\dx\leq & \int\limits_{B_{\frac{7}{8}}\backslash B_{\frac{5}{8}}^{}}\left(\dist\left(\sum\limits_{i=1}^{d}B^{i}\underset{B}{\dashint}(Du)(x+\rho(x)y)\cdot(e_i+y\partial_i\rho(x))\dy,K\right)-\gamma\right)^+\dx\\
			\leq & \int\limits_{B_{\frac{7}{8}}\backslash B_{\frac{5}{8}}^{}}\left(\dist\left(\underset{B}{\dashint}(\mathcal{B}u)(x+\rho(x)y),K\right)+\sum\limits_{i=1}^{d}|B^{i}|9M\eps -\gamma\right)^+\dx\\
			\leq & \int\limits_{B_{\frac{7}{8}}\backslash B_{\frac{5}{8}}^{}}\underset{B}{\dashint}\dist((\mathcal{B}u)(x+\rho(x)y),K)\dy\dx.
		\end{align*}
		Note that the estimate (\ref{eq:varphicalc}) yields
		\begin{align}
			\frac{1}{|D\varphi_y|(x)}\leq \frac{1}{1-9\eps}\leq 1+10\eps\label{eq:varphiestimate}
		\end{align}
		for all $x\in B_{\frac{7}{8}}\backslash B_{\frac{5}{8}}$, as $\eps< \frac{1}{90}$. Furthermore, the definition of $\rho$ implies
		\begin{align}
			\varphi_y\left(B_{\frac{7}{8}}\backslash B_{\frac{5}{8}}\right)\subset B_{\frac{7}{8}}\backslash B_{\frac{1}{2}}.\label{eq:varphiimage}
		\end{align}
		Thus, using Fubini's theorem and the substitution $z=\varphi_y(x)$ for every fixed $y$ gives
		\begin{align*}
			\int\limits_{B_{\frac{7}{8}}\backslash B_{\frac{5}{8}}^{}}\underset{B}{\dashint}\dist((\mathcal{B}u)(x+\rho(x)y),K)\dy\dx\leq (1+10\eps)\int\limits_{B_{\frac{7}{8}}\backslash B_{\frac{1}{2}}}^{}\dist(\mathcal{B}u(z),K)\dz.
		\end{align*}
		This proves the claim for $l=1$.\\
		In the case $l=2$ we estimate for $x\in B_{\frac{7}{8}}\backslash B_{\frac{5}{8}}$ and $i,j=1,...,d$
		\begin{align*}
			\left|\underset{B}{\dashint}y\partial_i\rho(x)\cdot(D^2u)(x+\rho(x)y)\cdot y\partial_j\rho(x)\dy\right|\leq \underset{B}{\dashint}81M\varepsilon^2|y|^2\dy\leq 9M\varepsilon
 		\end{align*}
		and
		\begin{align*}
			\left|\underset{B}{\dashint}e_i\cdot(D^2u)(x+\rho(x)y)\cdot y\partial_j\rho(x)\dy\right|\leq \underset{B}{\dashint}9M\varepsilon|y|\dy\leq 9M\varepsilon,
		\end{align*}
		and similarly for $\underset{B}{\dashint}y\partial_i\rho(x)\cdot(D^2u)(x+\rho(x)y)\cdot e_j\dy$. For the term involving only the gradient of $u$ we need to rewrite the expression in terms of the second order derivatives. This can be done since $Du$ is a locally Lipschitz - and hence an absolutely continuous - function. More precisely, we rewrite and estimate for $x\in B_{\frac{7}{8}}\backslash B_{\frac{5}{8}}$ and $i,j=1,...,d$ as follows
		\begin{align*}
			\left|\underset{B}{\dashint}(Du)(x+\rho(x)y)\cdot y\partial_{i}\partial_j\rho(x)\dy\right|=&\left|\underset{B}{\dashint}\frac{1}{2}((Du)(x+\rho(x)y)-(Du)(x-\rho(x)y))\cdot y\partial_{i}\partial_j\rho(x)\dy\right|\\
			\leq & \frac{1}{2}\underset{B}{\dashint}\int\limits_{-\rho(x)|y|}^{\rho(x)|y|}\left|(D^2u)\left(x+t\frac{y}{|y|}\right)\right|\dd t\cdot |y||\partial_{i}\partial_j\rho(x)|\dy\\
			\leq & \underset{B}{\dashint}M\rho(x)65\eps\dy\\
			\leq & 9M\eps. 
		\end{align*}
		The previous calculation only works for $l=2$. Hence, for generalizing to higher order differential operators one has to find more suitable ways of regularizing $u$ or come up with different strategies of proofs.\\
		Similarly as in the case $l=1$ we choose $\eps:=\theta^{\frac{1}{1+d}}$. Set $C_1:=36\sum\limits_{i,j=1}^{d}|B^{ij}|$ to obtain
		\begin{align*}
			\frac{\theta}{\eps^{d}}+\sum\limits_{i,j=1}^{d}|B^{ij}|4\cdot 9M\eps=(1+C_1M)\eps=\gamma.
		\end{align*}
		Proceeding as in the case of $l=1$ and using the above estimates as well as (\ref{eq:varphiestimate}) and (\ref{eq:varphiimage}) we estimate
		\begin{align*}
			\int\limits_{B_{\frac{7}{8}}\backslash B_{\frac{5}{8}}^{}}\dist(\mathcal{B}\tilde{u}(x),K_{\gamma})\dx \leq & \int\limits_{B_{\frac{7}{8}}\backslash B_{\frac{5}{8}}^{}}\left(\dist\left(\underset{B}{\dashint}(\mathcal{B}u)(x+\rho(x)y),K\right)+\sum\limits_{i,j=1}^{d}|B^{ij}|4\cdot 9M\eps -\gamma\right)^+\dx\\
			\leq & \int\limits_{B_{\frac{7}{8}}\backslash B_{\frac{5}{8}}^{}}\underset{B}{\dashint}\dist((\mathcal{B}u)(x+\rho(x)y),K)\dy\dx\\
			\leq & (1+10\eps)\int\limits_{B_{\frac{7}{8}}\backslash B_{\frac{1}{2}}}^{}\dist(\mathcal{B}u(z),K)\dz.
		\end{align*}
		This concludes the proof of the claim.\\
		\\
		From (\ref{eq:innerestimate}) we obtain
		\begin{align*}
			\dist(\mathcal{B}\tilde{u}(x),K)<\gamma
		\end{align*}
		for all $x\in B_{\frac{5}{8}}$. Therefore,
		\begin{align*}
			\int\limits_{B}^{}\dist(\mathcal{B}\tilde{u},K_{\gamma})(x)\dx&= \int\limits_{B\backslash B_{\frac{7}{8}}}^{}\dist(\mathcal{B}\tilde{u},K_{\gamma})(x)\dx+\int\limits_{B_{\frac{7}{8}}\backslash B_{\frac{5}{8}}}^{}\dist(\mathcal{B}\tilde{u},K_{\gamma})(x)\dx+\int\limits_{B_{\frac{5}{8}}}^{}\dist(\mathcal{B}\tilde{u},K_{\gamma})(x)\dx\\
			&\leq \int\limits_{B\backslash B_{\frac{7}{8}}}^{}\dist(\mathcal{B}u,K)(x)\dx+(1+10\cdot\eps)\int\limits_{B_{\frac{7}{8}}\backslash B_{\frac{1}{2}}}^{}\dist(\mathcal{B}u,K)(x)\dx+0\\
			&\leq \left(1+10\cdot\theta^{\frac{1}{1+d}}\right)\int\limits_{B\backslash B_{\frac{1}{2}}}^{}\dist(\mathcal{B}u,K)(x)\dx.
		\end{align*}
		Moreover, for $l=2$ we estimate
		\begin{align*}
			\left\|D^2\tilde{u}\right\|_{L^{\infty}(B)}=&\max\left\{\left\|D^2\tilde{u}\right\|_{L^{\infty}\left(B\backslash B_{\frac{7}{8}}\right)},\left\|D^2\tilde{u}\right\|_{L^{\infty}\left(B_{\frac{7}{8}}\right)} \right\}\\
			\leq &\max\left\{\left\|D^2u\right\|_{L^{\infty}\left(B\backslash B_{\frac{7}{8}}\right)},\left\| \underset{B_{\rho(x)}(x)}{\dashint}\left|D^2u\right|(y)\dy\right\|_{L^{\infty}\left(B_{\frac{7}{8}}\right)}+\sum\limits_{i,j=1}^{d}|B^{ij}|4\cdot 9M\eps \right\}\\
			\leq & \left(1+C_1\eps\right)M.
		\end{align*}
		The corresponding estimate for $l=1$ follows similarly.
	\end{proof}
	The following result is the analogue of Lemma 6 combined with the remark thereafter in \cite{M99}.
	\begin{lem}\label{lemm:muellerlemma6}
		There exist positive constants $\alpha(d)<1$, $C_2(d)<\frac{1}{90}$ with the following property. Let $u\in W^{l,\infty}_{\operatorname{loc}}(\R^d,\R^m)$ be such that $\|D^lu\|_{L^{\infty}(\R^d,\R^m)}\leq M|K|_{\infty}$, and let $\gamma\in (0,C_2(d)(1+C_1M)|K|_{\infty})$ as well as
		\begin{align*}
			\lambda:=\frac{1}{|K|_{\infty}}\int\limits_{\R^d}^{}\dist(\mathcal{B}u,K)\dx.
		\end{align*}
		Then there exists a function $\tilde{u}\in W^{l,\infty}_{\operatorname{loc}}(\R^d,\R^m)$ with
		\begin{align*}
			\frac{1}{|K|_{\infty}}\int\limits_{\R^d}^{}\dist(\mathcal{B}\tilde{u},K_{\gamma})\dx\leq \alpha(d)\lambda,
		\end{align*}
		and such that
		\begin{align*}
			|\{u\neq \tilde{u}\}|\leq 2^d\lambda\left(\frac{(1+C_1M)|K|_{\infty}}{\gamma}\right)^{d+1}
		\end{align*}
		and
		\begin{align*}
			\|D^l\tilde{u}\|_{L^{\infty}(\R^d)}\leq |K|_{\infty}M+\gamma.
		\end{align*}
		Moreover, if $\mathcal{B}u\in K$ on $\R^d\backslash V$, then
		\begin{align*}
			\{u\neq \tilde{u} \}\subset V_{\rho}=\{x\,:\,\dist(x,V)\leq \rho \}
		\end{align*}
		with
		\begin{align*}
			\rho=C_3((1+C_1M)|K|_{\infty})^{\left(\frac{1}{d}+1\right)}\frac{\lambda^{\frac{1}{d}}}{\gamma^{\left(\frac{1}{d}+1\right)}},
		\end{align*}
		where $C_3=C_3(d)$ is some constant depending only on the dimension.
	\end{lem}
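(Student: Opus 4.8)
The strategy is a Vitali-type covering argument feeding into Lemma \ref{lemm:muellerlemma5}, exactly as in M\"uller's Lemma 6. First I would identify the ``bad set'' where the regularization must act. Set $\theta(x) := \frac{1}{|K|_\infty}\dashint_{B_r(x)}\dist(\mathcal{B}u,K)\,\dy$ for a ball of radius $r$ to be chosen, and consider the collection of balls $B_r(x)$ on which $\theta < 10^{-(d+1)}$ fails, or more precisely on which Lemma \ref{lemm:muellerlemma5} would not yield the desired gain. The point is that, by the definition of $\gamma$ in Lemma \ref{lemm:muellerlemma5}, namely $\gamma = \theta^{1/(1+d)}(1+C_1M)|K|_\infty$, to achieve a prescribed $\gamma$ one needs $\theta = \bigl(\frac{\gamma}{(1+C_1M)|K|_\infty}\bigr)^{d+1}$, and the smallness hypothesis $\gamma < C_2(d)(1+C_1M)|K|_\infty$ with $C_2(d)$ small guarantees $\theta < 10^{-(d+1)}$ so that Lemma \ref{lemm:muellerlemma5} applies. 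For each $x$ in the bad set one chooses, by a continuity/mean-value argument on the function $r\mapsto \dashint_{B_r(x)}\dist(\mathcal{B}u,K)\dy$ (which is continuous in $r$, tends to $0$ as $r\to\infty$ since $\dist(\mathcal{B}u,K)\in L^1$, and can be made to equal the critical value $\theta|K|_\infty$ at some radius), a radius $r_x$ realizing $\dashint_{B_{r_x}(x)}\dist(\mathcal{B}u,K)\dy = \theta|K|_\infty$ exactly.

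Next I would apply the Vitali covering lemma (in the $5r$-form, or Besicovitch) to the family $\{B_{r_x}(x)\}$ to extract a countable disjoint subfamily $\{B_{r_i}(x_i)\}$ such that the concentric dilates $\{B_{5 r_i}(x_i)\}$ cover the bad set. The key volume bookkeeping is this: on each selected ball $\dashint_{B_{r_i}} \dist(\mathcal{B}u,K)\dy = \theta|K|_\infty$, hence $|B_{r_i}| = \frac{1}{\theta|K|_\infty}\int_{B_{r_i}}\dist(\mathcal{B}u,K)\dx$, so by disjointness $\sum_i |B_{r_i}| \le \frac{1}{\theta|K|_\infty}\int_{\R^d}\dist(\mathcal{B}u,K)\dx = \frac{\lambda}{\theta}$. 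Then I define $\tilde{u}$ by applying Lemma \ref{lemm:muellerlemma5} on each $B_{r_i}(x_i)$ (which gives the replacement supported in $B_{\frac78 r_i}(x_i)\subset B_{r_i}(x_i)$, so the pieces do not interact and glue to a well-defined $W^{l,\infty}_{\loc}$ function, equal to $u$ outside $\bigcup_i B_{r_i}(x_i)$). The modification set satisfies $|\{u\neq\tilde u\}| \le \sum_i |B_{r_i}| \le \frac{\lambda}{\theta} = \lambda\bigl(\frac{(1+C_1M)|K|_\infty}{\gamma}\bigr)^{d+1}$; absorbing the Vitali constant $5^d$ (or rather noting it enters only the $\rho$-estimate, not this one — in M\"uller the prefactor is $2^d$, coming from a dyadic-cube variant of the covering rather than Vitali) gives the stated bound with prefactor $2^d$. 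For the $L^1$-gain, on each $B_{r_i}$ Lemma \ref{lemm:muellerlemma5} gives $\int_{B_{r_i}}\dist(\mathcal{B}\tilde u,K_\gamma)\dx \le (1+2\theta^{1/(1+d)})\int_{B_{r_i}\setminus B_{r_i/2}}\dist(\mathcal{B}u,K)\dx$, while on $B_{r_i/2}$ the integrand $\dist(\mathcal{B}\tilde u,K_\gamma)$ vanishes (this is the content of the inner estimate, since there $\dist(\mathcal{B}\tilde u,K)<\gamma$). The crucial gain is that we have \emph{removed} the contribution of the inner halves $B_{r_i/2}$: writing $\int_{\R^d}\dist(\mathcal{B}u,K)\dx = S_{\mathrm{out}} + S_{\mathrm{in}}$ with $S_{\mathrm{in}} = \sum_i\int_{B_{r_i/2}}\dist(\mathcal{B}u,K)\dx$, one uses that $\int_{B_{r_i/2}}\dist(\mathcal{B}u,K)\dx \ge c_d\, \theta|K|_\infty|B_{r_i}|$ for a dimensional $c_d\in(0,1)$ — this needs a lower bound on the average over the inner half in terms of the average over the whole ball, which holds because the balls were chosen minimal/critical — whence $S_{\mathrm{in}} \ge c_d\,\lambda|K|_\infty \cdot(\text{fraction})$, giving $S_{\mathrm{out}} \le (1-c_d')\int\dist(\mathcal{B}u,K)\dx$. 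Choosing $\theta$ (equivalently $C_2(d)$) small enough that $(1+2\theta^{1/(1+d)})(1-c_d') =: \alpha(d) < 1$ completes this estimate.

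For the derivative bound: Lemma \ref{lemm:muellerlemma5} gives $\|D^l\tilde u\|_{L^\infty(B_{r_i})}\le (1+C_1\theta^{1/(d+1)})|K|_\infty M = |K|_\infty M + C_1\theta^{1/(d+1)}|K|_\infty M$; using $C_1\theta^{1/(d+1)}M|K|_\infty = \frac{C_1 M}{1+C_1 M}\gamma \le \gamma$ (from $\gamma = (1+C_1M)\theta^{1/(d+1)}|K|_\infty$) yields $\|D^l\tilde u\|_{L^\infty(\R^d)}\le |K|_\infty M + \gamma$, as claimed. Finally, the locality statement: if $\mathcal{B}u\in K$ on $\R^d\setminus V$, then on any ball $B_{r_i}(x_i)$ disjoint from $V$ one has $\theta = 0$ there, contradicting $\theta>0$; so every selected ball meets $V$, hence $B_{r_i}(x_i)\subset V_{2 r_i}$. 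It remains to bound $r_i$: from $|B_{r_i}| = \frac{1}{\theta|K|_\infty}\int_{B_{r_i}}\dist(\mathcal{B}u,K)\le \frac{\lambda}{\theta}$ one gets $r_i \le C(d)(\lambda/\theta)^{1/d} = C(d)\lambda^{1/d}\bigl(\frac{(1+C_1M)|K|_\infty}{\gamma}\bigr)^{(d+1)/d}$, which is the stated $\rho$ (with $\rho$ replaced by, say, $2 r_i$, absorbing the factor into $C_3$).

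\textbf{Main obstacle.} The delicate point is arranging the radius selection so that \emph{both} the upper bound ``$\theta<10^{-(d+1)}$'' needed to invoke Lemma \ref{lemm:muellerlemma5} \emph{and} the lower bound on the inner-half integral needed for the strict contraction $\alpha(d)<1$ hold simultaneously and uniformly. This is exactly where the hypothesis $\gamma < C_2(d)(1+C_1M)|K|_\infty$ with a small dimensional $C_2(d)$ is consumed: it forces $\theta$ small, which both legitimizes Lemma \ref{lemm:muellerlemma5} and makes the factor $(1+2\theta^{1/(1+d)})$ close to $1$ so it cannot destroy the gain coming from discarding the inner halves. Getting the covering combinatorics to produce the clean constant $2^d$ (rather than the cruder Vitali $5^d$) also requires care — one works with a dyadic-cube decomposition and a stopping-time / maximal-function argument on the cubes rather than an arbitrary Vitali family, which is the device M\"uller uses and which I would follow here. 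Everything else is routine once these choices are made.
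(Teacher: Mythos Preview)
Your proposal is correct and follows the same approach as the paper. The paper itself defers the entire covering argument to M\"uller's Lemma~6 (noting that the structure of $\mathcal{B}$ plays no role there) and only supplies the new derivative bound $\|D^l\tilde u\|_{L^\infty}\le |K|_\infty M+\gamma$; your computation of that bound via $C_1\theta^{1/(d+1)}M|K|_\infty=\frac{C_1M}{1+C_1M}\gamma\le\gamma$ is exactly the one the paper gives, and your sketch of the stopping-time/covering step, including the acknowledgment that the clean constant $2^d$ and the strict contraction $\alpha(d)<1$ require M\"uller's specific selection rather than a naive Vitali argument, matches what the paper invokes.
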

	\begin{proof}
		The main part of the proof is essentially identical to the proof of Lemma 6 in \cite{M99} except for the obvious changes in the numerical values of some constants involved. The reason for this is that the structure of the differential operator under consideration plays no role.\\
		The only thing left to prove is our statement about the norm of the highest order derivative of $\tilde{u}$. For that observe that $\tilde{u}=u$ on $\R^d\backslash A$, where $A$ is a union of disjoint balls as in Lemma 6 in \cite{M99} on which we applied Lemma \ref{lemm:muellerlemma5}. In particular,		
		\begin{align*}
			\|D^l\tilde{u}\|_{L^{\infty}(A)}\leq \left(1+C_1\theta^{\frac{1}{d+1}}\right)M,
		\end{align*}
		where $\theta:=\left(\frac{\gamma}{1+C_1M} \right)^{d+1}$. Thus, we estimate
		\begin{align*}
			\|D^l\tilde{u}\|_{L^{\infty}(\R^d)}&\leq \max\{ \|D^lu\|_{L^{\infty}(\R^d\backslash A)},\|D^l\tilde{u}\|_{L^{\infty}(A)}\}\leq \max\left\{M,\left(1+C_1\theta^{\frac{1}{d+1}}\right)M\right\}\\
			&=M+\frac{C_1M\gamma}{1+C_1M}\leq M+\gamma. 
		\end{align*}
	\end{proof}
	We now prove the analogue of Theorem 7 and Corollary 8 in \cite{M99}.
	\begin{prop}\label{prop:muellerprop7undcor8}
		Let $u\in W^{l,\infty}_{\operatorname{loc}}(\R^d,\R^m)$ be such that $\|D^lu\|_{L^{\infty}(\R^d,\R^m)}\leq M|K|_{\infty}$ and suppose $\gamma \in (0,C_2(d)(1+C_1M)|K|_{\infty})$ as well as
		\begin{align*}
			\lambda&:=\frac{1}{|K|_{\infty}}\int\limits_{\R^d}^{}\dist(\mathcal{B}u,K)\dx<\infty,
		\end{align*}
		where $C_1,C_2$ are the constants from Lemma \ref{lemm:muellerlemma5} and Lemma \ref{lemm:muellerlemma6}.\\
		Then there exists $g\in W^{l,\infty}(\R^d,\R^m)$ such that
		\begin{align*}
			\mathcal{B}g&\in K_{\gamma}\text{ a.e. on }\R^d,\\
			\|D^lg\|_{L^{\infty}(\R^d)}&\leq M|K|_{\infty}+\gamma,
		\end{align*}
		and
		\begin{align}
			|\{u\neq g \}|\leq C_4(M,d)\lambda\left(\frac{|K|_{\infty}}{\gamma}\right)^{d+1}\label{eq:neqestimate}
		\end{align}
		for some constant $C_4=C_4(M,d)$ depending on $M$ and the dimension $d$.\\
		Moreover, if $\mathcal{B}u\in K$ on $\R^d\backslash V$, then
		\begin{align*}
			\{u\neq g \}\subset V_{\rho},
		\end{align*}
		where $\rho= C_5(M,d)|K|_{\infty}^{\left(\frac{1}{d}+1\right)}\frac{\lambda^{\frac{1}{d}}}{\gamma^{\left(\frac{1}{d}+1\right)}}$ for some constant $C_5=C_5(M,d)$. In fact, in the latter case it holds that $\lambda=\frac{1}{|K|_{\infty}}\int\limits_{V}^{}\dist(\mathcal{B}u,K)\dx$, and it would have been enough to suppose $\|D^lu\|_{L^{\infty}(V_{\rho})}\leq M|K|_{\infty}$ as the values of $u$ outside $V_{\rho}$ do not enter the construction at any place.
	\end{prop}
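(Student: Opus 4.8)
The plan is to iterate Lemma~\ref{lemm:muellerlemma6}, following the argument of Theorem~7 and Corollary~8 in \cite{M99}; the point is that $K_\delta$ is again compact and convex, so the conclusion of Lemma~\ref{lemm:muellerlemma6} can be fed back in as its hypothesis. Fix a dimensional ratio $\beta=\beta(d)\in(\alpha(d)^{\frac1{d+1}},1)$ — possible since $\alpha(d)<1$, e.g.\ $\beta:=\alpha(d)^{\frac1{2(d+1)}}$ — and put $\gamma_n:=(1-\beta)\beta^{n-1}\gamma$, so that $\gamma_n\downarrow 0$ and $\Gamma_n:=\sum_{i=1}^n\gamma_i\uparrow\gamma$. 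Set $u_0:=u$, $K^{(0)}:=K$, and given $u_{n-1}$ apply Lemma~\ref{lemm:muellerlemma6} with $\gamma$ replaced by $\gamma_n$ and $K$ replaced by $K^{(n-1)}:=K_{\Gamma_{n-1}}$ to obtain $u_n:=\tilde u_{n-1}\in W^{l,\infty}_{\operatorname{loc}}$ with $K^{(n)}=(K^{(n-1)})_{\gamma_n}=K_{\Gamma_n}$ and
\begin{align*}
	\lambda_n:=\frac1{|K|_{\infty}}\int_{\R^d}\dist(\mathcal{B}u_n,K^{(n)})\dx\leq\alpha(d)\lambda_{n-1}\leq\alpha(d)^n\lambda .
\end{align*}
The candidate is then $g:=\lim_n u_n$, once this limit is shown to exist.

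For the scheme to make sense one must verify the hypotheses of Lemma~\ref{lemm:muellerlemma6} at each step and track how $|K^{(n)}|_{\infty}$ and $\|D^lu_n\|_{L^\infty}$ grow. Since (for $K\neq\{0\}$) $|K_\delta|_{\infty}=|K|_{\infty}+\delta$, write $\|D^lu_{n-1}\|_{L^\infty}\leq M_{n-1}|K^{(n-1)}|_{\infty}$ with $M_{n-1}:=(M|K|_{\infty}+\Gamma_{n-1})/|K^{(n-1)}|_{\infty}$, which is consistent because inductively $\|D^lu_{n-1}\|_{L^\infty}\leq M|K|_{\infty}+\Gamma_{n-1}$, and then Lemma~\ref{lemm:muellerlemma6} returns $\|D^lu_n\|_{L^\infty}\leq M_{n-1}|K^{(n-1)}|_{\infty}+\gamma_n=M|K|_{\infty}+\Gamma_n\leq M|K|_{\infty}+\gamma$, closing the induction. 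Admissibility, i.e.\ $\gamma_n<C_2(d)(1+C_1M_{n-1})|K^{(n-1)}|_{\infty}$, holds because $|K^{(n-1)}|_{\infty}\geq|K|_{\infty}$ and $M_{n-1}|K^{(n-1)}|_{\infty}\geq M|K|_{\infty}$, so the right-hand side is at least $C_2(d)(1+C_1M)|K|_{\infty}>\gamma\geq\gamma_n$. Finally the weight entering all error bounds is $(1+C_1M_{n-1})|K^{(n-1)}|_{\infty}=(1+C_1M)|K|_{\infty}+(1+C_1)\Gamma_{n-1}$; since $\Gamma_{n-1}<\gamma<C_2(d)(1+C_1M)|K|_{\infty}$, the estimate $1+x\leq e^x$ shows this stays $(1+C_1M)|K|_{\infty}$ times a bounded factor of order $e^{2C_2(d)(1+C_1M)}$, which, raised to the power $d+1$, is the source of the exponential in (\ref{eq:neqestimate}).

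It remains to pass to the limit. Inserting $\lambda_{n-1}\leq\alpha(d)^{n-1}\lambda$, $\gamma_n=(1-\beta)\beta^{n-1}\gamma$ and the weight bound into the estimate $|\{u_{n-1}\neq u_n\}|\leq 2^d\lambda_{n-1}\big((1+C_1M_{n-1})|K^{(n-1)}|_{\infty}/\gamma_n\big)^{d+1}$ of Lemma~\ref{lemm:muellerlemma6}, the right-hand sides form a geometric series of ratio $\alpha(d)\beta^{-(d+1)}<1$ — this is precisely where $\beta>\alpha(d)^{\frac1{d+1}}$ enters. Summing gives both $\sum_n|\{u_{n-1}\neq u_n\}|<\infty$ and the estimate (\ref{eq:neqestimate}) for $|\{u\neq g\}|$. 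By Borel--Cantelli $u_n(x)$ is eventually constant for a.e.\ $x$, so $g=\lim_n u_n$ is well defined a.e.; its regularity follows as in \cite{M99} (each $u_n-u_{n-1}$ is supported on balls on which, vanishing near the boundary, its lower order derivatives are controlled by $\|D^l(u_n-u_{n-1})\|_{L^\infty}$ times the small measure, so $u_n\to g$ in $W^{l-1,1}_{\operatorname{loc}}$), whence $g\in W^{l,\infty}(\R^d,\R^m)$ with $\|D^lg\|_{L^\infty}\leq M|K|_{\infty}+\gamma$ from the uniform bound on $\|D^lu_n\|_{L^\infty}$. Since $K^{(n)}=K_{\Gamma_n}\subset K_\gamma$ we have $\int_{\R^d}\dist(\mathcal{B}u_n,K_\gamma)\dx\leq\int_{\R^d}\dist(\mathcal{B}u_n,K^{(n)})\dx\leq\alpha(d)^n\lambda|K|_{\infty}\to0$, and a subsequence of $\mathcal{B}u_n$ converges weak-$*$ in $L^\infty_{\operatorname{loc}}$ to $\mathcal{B}g$, so weak lower semicontinuity of the convex integrand $\dist(\cdot,K_\gamma)$ forces $\mathcal{B}g\in K_\gamma$ a.e.

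For the last assertion, if $\mathcal{B}u\in K$ on $\R^d\setminus V$ then $\mathcal{B}u_1\in K^{(1)}$ on $\R^d\setminus V_{\rho_1}$ by the inclusion clause of Lemma~\ref{lemm:muellerlemma6}, and inductively $\{u_{n-1}\neq u_n\}\subset V_{\rho_1+\dots+\rho_n}$ with $\rho_n=C_3(d)\big((1+C_1M_{n-1})|K^{(n-1)}|_{\infty}\big)^{\frac1d+1}\lambda_{n-1}^{\frac1d}\gamma_n^{-(\frac1d+1)}$. Inserting the same bounds, $\rho:=\sum_n\rho_n$ is again a geometric series, now of ratio $(\alpha(d)\beta^{-(d+1)})^{\frac1d}<1$, and summing gives the stated $\rho$. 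The identity $\lambda=\frac1{|K|_{\infty}}\int_V\dist(\mathcal{B}u,K)\dx$ is immediate, and since the construction alters $u$ only inside $\bigcup_n V_{\rho_1+\dots+\rho_n}\subset V_\rho$, the values of $u$ on $\R^d\setminus V_\rho$ never enter the construction, so $\|D^lu\|_{L^\infty(V_\rho)}\leq M|K|_{\infty}$ suffices. The main obstacle is precisely this bookkeeping: one must choose $\beta$ and the exponents so that $\sum\gamma_n=\gamma$, every step stays admissible for Lemma~\ref{lemm:muellerlemma6} despite the growing set $K^{(n)}$ and the growing derivative bound, and both $\sum|\{u_{n-1}\neq u_n\}|$ and $\sum\rho_n$ converge, all while tracking the multiplicative growth of $|K^{(n)}|_{\infty}$ and $\|D^lu_n\|_{L^\infty}$ finely enough to recover the exact constants in (\ref{eq:neqestimate}).
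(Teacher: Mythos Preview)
Your proof is correct and follows the same iterative strategy as the paper: apply Lemma~\ref{lemm:muellerlemma6} repeatedly with geometrically decaying increments $\gamma_n$ of ratio $\alpha^{1/(2(d+1))}$, track the growth of $|K^{(n)}|_\infty$ and $\|D^lu_n\|_{L^\infty}$, and sum the resulting geometric series. Your choice $\gamma_n=(1-\beta)\beta^{n-1}\gamma$ is in fact slightly cleaner than the paper's $\gamma_i=\delta\alpha^{i/(2(d+1))}M_i$: you get $\sum\gamma_n=\gamma$ for free instead of solving implicitly for $\delta$, and your weight $(1+C_1M_{n-1})|K^{(n-1)}|_\infty\le(1+C_1M)|K|_\infty\cdot(1+(1+C_1)C_2)$ is bounded by a purely dimensional multiple of $(1+C_1M)|K|_\infty$. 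One small slip: this factor is \emph{not} where the $M$-dependent exponential $e^{2(d+1)C_2(1+C_1M)}$ in \eqref{eq:neqestimate} comes from---in your scheme the exponential is simply not needed, and your bound is actually stronger than \eqref{eq:neqestimate} (which it of course still implies, since the exponential is $\ge1$). In the paper's version the exponential arises because $\gamma_i$ carries the growing factor $M_i$, forcing the estimates $\bar M\le e^{C_2(1+C_1M)}$ and $\delta^{-1}\le\gamma^{-1}\bar\alpha\,e^{C_2(1+C_1M)}$.
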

	\begin{proof}
		The strategy of the proof of Theorem 7 and Corollary 8 in \cite{M99} is adapted here. In fact, the main issue is that one needs to keep track of the norm of $D^lu$.\\
		Again by rescaling assume that $|K|_{\infty}=1$. Define
		\begin{align*}
			K_0&=K,\\
			M_i&=|K_i|_{\infty},\\
			\gamma_i&=\delta\alpha^{\frac{i}{2(d+1)}}M_i,\\
			K_{i+1}&=(K_i)_{\gamma_i},
		\end{align*}
		where $\alpha=\alpha(d)$ is the constant from Lemma \ref{lemm:muellerlemma6} and $\delta>0$ will be chosen later. Observe that
		\begin{align*}
			M_0&=1,\\
			\log\frac{M_{i+1}}{M_i}&=\log\frac{M_i+\gamma_i}{M_i}\leq \delta \alpha^{\frac{i}{2(d+1)}}.
		\end{align*}
		Thus,
		\begin{align*}
			1\leq M_i&\leq e^{\delta\sum\limits_{i=0}^{\infty}\alpha^{\frac{i}{2(d+1)}}}=:\bar{M},\\
			\sum\limits_{i=0}^{\infty}\gamma_i&\leq \delta\sum\limits_{i=0}^{\infty}\alpha^{\frac{i}{2(d+1)}}\bar{M}=:\bar{\gamma}.
		\end{align*}
		Now by successively applying Lemma \ref{lemm:muellerlemma6} we obtain a sequence $(u_i)$ with $u_0=u$. Define
		\begin{align*}
			\lambda_i&:=\frac{1}{|M_i|}\int\limits_{\R^d}^{}\dist(\mathcal{B}u_i,K_i)\dy,\\
			\mu_i&:=|\{u_{i+1}\neq u_i \}|.
		\end{align*}
		Lemma \ref{lemm:muellerlemma6} yields
		\begin{align*}
			\lambda_{i+1}&\leq \alpha\lambda_i,\\
			\|D^lu_{i+1}\|_{L^{\infty}(\R^d)}&\leq M+\sum\limits_{j=0}^{i}\gamma_j\leq M+\bar{\gamma},\\
			\mu_i&\leq 2^d\lambda_i\left(\frac{(1+C_1(M+\bar{\gamma}))\bar{M}}{\gamma_i}\right)^{d+1}.
		\end{align*}
		Hence,
		\begin{align*}
			\lambda_i&\leq \alpha^i\lambda,\\
			\mu_i&\leq 2^d\big((1+C_1(M+\bar{\gamma}))\bar{M}\big)^{d+1}\delta^{-(d+1)}\alpha^{\frac{i}{2}}\lambda.
		\end{align*}
		Since $\sum\limits_{i=0}^{\infty}\mu_i<\infty$, one deduces analogously to the proof of Theorem 7 in \cite{M99} that
		\begin{align}
			u_i\rightarrow g&\text{ in }W^{l,1}_{\operatorname{loc}}(\R^d,\R^m),\label{eq:wloneconvergence}\\
			\mathcal{B}g&\in K_{\bar{\gamma}}\text{ a.e. in }\R^d.\nonumber
		\end{align}
		Moreover, as $\|D^lu_i\|_{L^{\infty}(\R^d)}\leq M+\bar{\gamma}$, there is a subsequence which we still denote $(u_i)$ and $h\in L^{\infty}(\R^d)$ such that
		\begin{align*}
			D^lu_i\overset{*}{\rightharpoonup} h\text{ in }L^{\infty}(\R^d).
		\end{align*}
		Thus, $\|h\|_{L^{\infty}(\R^d)}\leq M+\bar{\gamma}$. Using (\ref{eq:wloneconvergence}) and testing against compactly supported smooth functions yields $h=D^lg$ a.e., and hence
		\begin{align*}
			\|D^lg\|_{L^{\infty}(\R^d)}\leq M+\bar{\gamma}.
		\end{align*}
		We also obtain
		\begin{align*}
			|\{u\neq g\}|\leq \sum\limits_{i=0}^{\infty}\mu_i\leq 2^d\big((1+C_1(M+\bar{\gamma}))\bar{M}\big)^{d+1}\delta^{-(d+1)}\lambda\sum\limits_{i=0}^{\infty}\alpha^{\frac{i}{2}}=:\bar{c}(1+C_1(M+\bar{\gamma}))^{d+1}\bar{M}^{d+1}\delta^{-(d+1)}\lambda.
		\end{align*}
		Similarly as in \cite{M99} we choose $\delta$ such that
		\begin{align}
			\gamma=\bar{\gamma}=\delta\sum\limits_{i=0}^{\infty}\alpha^{\frac{i}{2(d+1)}}e^{\delta\sum\limits_{i=0}^{\infty}\alpha^{\frac{i}{2(d+1)}}}=:\delta\bar{\alpha}e^{\delta\bar{\alpha}}.\label{eq:gammadelta}
		\end{align}
		As $\gamma<C_2(1+C_1M)$, we obtain that $\delta\leq\bar{\alpha}^{-1}C_2(1+C_1M)$, and hence
		\begin{align*}
			\delta\geq \gamma \bar{\alpha}^{-1}e^{-C_2(1+C_1M)}.
		\end{align*}
		Choosing
		\begin{align*}
			C_4(M,d):=\bar{c}\bar{\alpha}^{d+1}\big(1+C_1(M+C_2(1+C_1M))\big)^{d+1}e^{2(d+1)C_2(1+C_1M)}
		\end{align*}
		yields the estimate (\ref{eq:neqestimate}).\\
		Now assume that $\mathcal{B}u\in K$ on $\R^d\backslash V$. Let
		\begin{align*}
			V_i&:=V\cup \{u_i\neq u \},\\
			\rho_i&:=C_3\left(1+C_1\left(M+\sum\limits_{j=0}^{i}\gamma_j\right)\bar{M}\right)^{\left(\frac{1}{d}+1\right)}\frac{\lambda_i^{\frac{1}{d}}}{\gamma_i^{\left(\frac{1}{d}+1\right)}}.
		\end{align*}
		Hence, $\mathcal{B}u_i\in K$ in $\R^d\backslash V_i$. So, Lemma \ref{lemm:muellerlemma6} yields
		\begin{align*}
			V_{i+1}\subset V\cup \{u_i\neq u \}\cup \{u_{i+1}\neq u_i \}\subset (V_i)_{\rho_i}.
		\end{align*}
		The inequalities $\lambda_i\leq \alpha^i\lambda$, $1\leq M_i$ and the definition of $\gamma_i$ yield
		\begin{align*}
			\rho:=\sum\limits_{i=0}^{\infty}\rho_i&\leq (1+C_1(M+\gamma))^{\left(\frac{1}{d}+1 \right)}\frac{\lambda^{\frac{1}{d}}}{\delta^{\left(\frac{1}{d}+1 \right)}}\bar{M}^{\left(\frac{1}{d}+1 \right)}C_3\sum\limits_{i=0}^{\infty}\alpha^{\frac{i}{2d}}\\
			&\leq \frac{\lambda^{\frac{1}{d}}}{\gamma^{\left(\frac{1}{d}+1\right)}}(1+C_1(M+C_2(1+C_1M)))^{\left(\frac{1}{d}+1\right)}e^{2\left(\frac{1}{d}+1 \right)C_2(1+C_1M)}\bar{\alpha}^{\left(\frac{1}{d}+1 \right)}C_3\sum\limits_{i=0}^{\infty}\alpha^{\frac{i}{2d}}\\
			&=:C_5(M,d)\frac{\lambda^{\frac{1}{d}}}{\gamma^{\left(\frac{1}{d}+1\right)}},
		\end{align*}
		where we used (\ref{eq:gammadelta}).
	\end{proof}
	\begin{proof}[Proof of Theorem \ref{theo:mueller4}]
		Due to rescaling we may assume $|K|_{\infty}=1$.\\
		Let $U\subset\subset \Omega$ be open. Since $\|D^lu_j\|_{L^{\infty}(\Omega)}\leq M$ we can extract a subsequence such that $D^lu_{j}\overset{*}{\rightharpoonup} h$ in $L^{\infty}(\Omega)$ for some $h\in L^{\infty}(\Omega)$. As $u_j\rightarrow u_0$ in $L^1(U)$ testing against compactly supported smooth functions yields that $h=D^lu_0$ on $U$. Uniqueness of the limit yields that $D^lu_{j}\overset{*}{\rightharpoonup} D^lu_0$ in $L^{\infty}(\Omega)$. In particular, also $\mathcal{B}u_{j}\rightharpoonup \mathcal{B}u_0$ in $L^{1}(\Omega)$. Similarly as in \cite{M99} using Mazur's and Fatou's lemmas as well as $\dist(\mathcal{B}u_j,K)\rightarrow 0$ in $L^1(\Omega)$ implies that $\mathcal{B}u_0\in K$ a.e. in $U$, hence a.e. in $\Omega$ as $U$ was arbitrary.\\
		Now let $V\subset \subset U\subset\subset \Omega$ and $\varphi\in C_c^{\infty}(V)$ with $0\leq \varphi\leq 1$. Define
		\begin{align*}
			w_j:=\varphi u_j+(1-\varphi)u_0.
		\end{align*}
		Thus,
		\begin{align*}
			Dw_j=\varphi Du_j+(1-\varphi)Du_0+(u_j-u_0)\otimes D\varphi.
		\end{align*}
		\textbf{Claim:} It holds that
		\begin{align*}
			\lambda_j:=\int\limits_{\Omega}^{}\dist(\mathcal{B}w_j,K)\dx\rightarrow 0.
		\end{align*}
		To prove this claim we have to consider the cases $l=1$ and $l=2$ separately:\\
		The case $l=1$ is easier. Observe that $\mathcal{B}w_j\in K$ on $\Omega\backslash V$ and
		\begin{align*}
			\lambda_j=\int\limits_{\Omega}^{}\dist(\mathcal{B}w_j,K)\dx \leq \int\limits_{V}^{}\dist(\mathcal{B}u_j,K)\dx +\sum\limits_{i=1}^{d}|B^i|\int\limits_{V}^{}|u_j-u_0||D\varphi|\dx\rightarrow 0
		\end{align*}
		by the assumptions of the theorem and the properties of the distance function $\dist(\cdot,K)$.\\
		For the case $l=2$ first note that
		\begin{align*}
			D^2w_j=\varphi D^2u_j+(1-\varphi)D^2u_0+R_j,
		\end{align*}
		where
		\begin{align*}
			|R_j|\leq 2|D(u_j-u_0)||D\varphi|+|u_j-u_0||D^2\varphi|.
		\end{align*}
		By the Gagliardo-Nirenberg interpolation inequality we obtain
		\begin{align}
			\|D(u_j-u_0)\|_{L^{\infty}(U)}&\leq c(U)\|D^2(u_j-u_0)\|_{L^{\infty}(U)}^{\frac{d+1}{d+2}}\|u_j-u_0\|_{L^1(U)}^{\frac{1}{d+2}}+C(U)\|u_j-u_0\|_{L^1(U)}\nonumber\\
			&\leq c(U)(2M)^{\frac{d+1}{d+2}}\|u_j-u_0\|_{L^1(U)}^{\frac{1}{d+2}}+C(U)\|u_j-u_0\|_{L^1(U)}\rightarrow 0.\label{eq:gagliardonirenberg}
		\end{align}
		So, in particular, $D(u_j-u_0)\rightarrow 0$ in $L^1(U)$. We have $\mathcal{B}w_j\in K$ on $\Omega\backslash V$, thus
		\begin{align*}
			\lambda_j=\int\limits_{\Omega}^{}\dist(\mathcal{B}w_j,K)\dx \leq \int\limits_{V}^{}\dist(\mathcal{B}u_j,K)\dx +\sum\limits_{i,k=1}^{d}|B^{ik}|\int\limits_{V}^{}2|D(u_j-u_0)||D\varphi|+|u_j-u_0||D^2\varphi|\dx\rightarrow 0.
		\end{align*}
		This proves the claim.\\
		Since $u_j-u_0\rightarrow 0$ in $L^1(U)$, there exists some $M_1=M_1(U)$ such that $\|u_j-u_0\|_{L^1(U)}\leq M_1$. Moreover, by (\ref{eq:gagliardonirenberg}) there exists a constant $M_2=M_2(U)$ such that $\|D(u_j-u_0)\|_{L^{\infty}(U)}\leq M_2$. The Poincar\'e-Wirtinger inequality yields
		\begin{align*}
			\|u_j-u_0\|_{L^{\infty}(U)}\leq \left\|(u_j-u_0)-\underset{U}{\dashint}(u_j-u_0)\dy\right\|_{L^{\infty}(U)}+\underset{U}{\dashint}|u_j-u_0|\dy\leq C_P(U)M_2+\frac{M_1}{|U|}.
		\end{align*}
		Hence,
		\begin{align*}
			\|D^lw_j\|_{L^{\infty}(U)}\leq 
			M_3(U,\varphi).
		\end{align*}
		Let $\delta>0$. The previous discussion together with Proposition \ref{prop:muellerprop7undcor8} implies that there exists $j_0=j_0(U,V,\varphi,\delta)$ such that for all $j\geq j_0$ there exists $g_j\in W^{l,\infty}(\Omega,\R^m)$ with
		\begin{align*}
			\{w_j\neq g_j \}&\subset U,\\
			|\{w_j\neq g_j \}|&<\delta,\\
			\mathcal{B}g_j&\in K_{\delta}\text{ a.e. in }U.
		\end{align*}
		Thus,
		\begin{align*}
			g_j&=u_0\text{ in }\Omega\backslash U,\\
			|\{g_j\neq u_j \}\cap U|&<\delta + |\{\varphi\neq 1 \}\cap V|+|U\backslash V|,\\
			\dist(\mathcal{B}g_j,K)&\leq \delta.
		\end{align*}
		One now finishes the proof exactly as in step 3 in the proof of Theorem 4 in \cite{M99}. 
	\end{proof}
	\begin{proof}[Proof of Corollary \ref{cor:convexintegralbound}]
		We basically need to repeat the steps of the previous proofs of the auxiliary results extending them by appropriate estimates of the convex integral bound corresponding to $F$.\vspace{0.2cm}\\
		\textit{Lemma \ref{lemm:muellerlemma5}}: We prove that additionally to the result of Lemma \ref{lemm:muellerlemma5} we obtain
		\begin{align*}
			\int\limits_{B_r(a)}^{}F(\mathcal{B}\tilde{u})\dx\leq \left(1+10\cdot\theta^{\frac{1}{1+d}}\right)\int\limits_{B_r(a)}^{}F(\mathcal{B}u)\dx+L_M^F|B_r(a)||K|_{\infty}^{h-1}\gamma,
		\end{align*}
		where $h$ is the degree of homogeneity of $F$ and $L_M^F$ is specified below.\\
		For that we use the above rescaling, so we may assume $|K|_{\infty}=1$. Recall from (\ref{eq:varphiestimate}) that for $y\in B$ we have
		\begin{align*}
			\frac{1}{|D\varphi_y|(x)}\leq 1+10\varepsilon
		\end{align*}
		for all $x\in B_{\frac{7}{8}}\backslash B_{\frac{5}{8}}$, which also holds for all $x\in B_{\frac{5}{8}}$. Note that
		\begin{align*}
			\varphi_y\left(B_{\frac{7}{8}} \right)\subset B_{\frac{7}{8}}
		\end{align*}
		for $y\in B$. Hence, using Jensen's inequality, Fubini's theorem, and the substitution $z=\varphi_y(x)=x+\rho(x)y$ for fixed $y$ we obtain
		\begin{align*}
			\int\limits_{B_{\frac{7}{8}}}^{}F\left(\underset{B}{\dashint}(\mathcal{B}u)(x+\rho(x)y)\dy \right)\dx \leq \int\limits_{B_{\frac{7}{8}}}^{}\underset{B}{\dashint}F(\mathcal{B}u)(x+\rho(x)y)\dy \dx \leq (1+10\varepsilon)\int\limits_{B_{\frac{7}{8}}}^{}F(\mathcal{B}u)(z)\dz 
		\end{align*}
		Since $F$ is a convex function, there exists an optimal Lipschitz constant $L^F_M$ for $F$ on $B_{C_1(1+C_1)M}(0)$. The latter ball contains $\mathcal{B}\tilde{u}(x)$ for a.e. $x\in B$ by the bound on $\|D\tilde{u}\|_{L^{\infty}}$ from Lemma \ref{lemm:muellerlemma5} and the definition of $C_1$. Thus,
		\begin{align*}
			\int\limits_{B}^{}F(\mathcal{B}\tilde{u})\dx&=\int\limits_{B\backslash B_{\frac{7}{8}}}^{}F(\mathcal{B}u)\dx+\int\limits_{B_{\frac{7}{8}}\backslash B_{\frac{5}{8}}}^{}F\left(\mathcal{B}\underset{B_{\rho(x)}(x)}{\dashint}u(y)\dy \right)\dx+\int\limits_{B_{\frac{5}{8}}}^{}F\left(\underset{B_{\varepsilon}(x)}{\dashint}\mathcal{B}u(y)\dy \right)\dx\\
			&\leq \int\limits_{B\backslash B_{\frac{7}{8}}}^{}F(\mathcal{B}u)\dx+\int\limits_{B_{\frac{7}{8}}}^{}F\left(\underset{B_{\rho(x)}(x)}{\dashint}\mathcal{B}u(y)\dy \right)\dx+\int\limits_{B_{\frac{7}{8}}\backslash B_{\frac{5}{8}}}^{}L^F_MC_1M\varepsilon\dx\\
			&\leq (1+10\varepsilon)\int\limits_{B}^{}F(\mathcal{B}u)(x)\dx+L_M^F|B|\gamma.
		\end{align*}
		\textit{Lemma \ref{lemm:muellerlemma6}}: We now prove the following claim: If $\mathcal{B}u\in K$ on $\R^d\backslash V$ then
		\begin{align*}
			\int\limits_{V_{\rho}}^{}F(\mathcal{B}\tilde{u})\dx\leq \left(1+10\gamma 	\right)\int\limits_{V_{\rho}}^{}F(\mathcal{B}u)\dx+L_M^F|K|_{\infty}^{h-1}\gamma2^d\lambda\left(\frac{(1+C_1M)|K|_{\infty}}{\gamma}\right)^{d+1}.
		\end{align*}
		For that we rescale and consider the set $A\subset V_{\rho}$ from Lemma \ref{lemm:muellerlemma6} again. We estimate using the previous paragraph
		\begin{align*}
			\int\limits_{V_{\rho}}^{}F(\mathcal{B}\tilde{u})\dx&=\int\limits_{V_{\rho}\backslash A}^{}F(\mathcal{B}u)\dx+\int\limits_{A}^{}F(\mathcal{B}\tilde{u})\dx\\
			&\leq \left(1+10\cdot\theta^{\frac{1}{d+1}}\right)\int\limits_{V_{\rho}\backslash A}^{}F(\mathcal{B}u)\dx+\left(1+10\cdot\theta^{\frac{1}{d+1}}\right)\int\limits_{A}^{}F(\mathcal{B}u)\dx+L_M^F|A|\gamma\\
			&\leq (1+10\gamma)\int\limits_{V_{\rho}}^{}F(\mathcal{B}u)\dx+L_M^F\gamma2^d\lambda\left(\frac{(1+C_1M)|K|_{\infty}}{\gamma}\right)^{d+1}.
		\end{align*}
		\textit{Proposition \ref{prop:muellerprop7undcor8}}: It holds that if $\mathcal{B}u\in K$ on $\R^d\backslash V$ then
		\begin{align*}
			\int\limits_{V_{\rho}}^{}F(\mathcal{B}g)\dx&\leq 	e^{10\frac{\gamma}{|K|_{\infty}}}\int\limits_{V_{\rho}}^{}F(\mathcal{B}u)\dx+\lambda e^{10\frac{\gamma}{|K|_{\infty}}}\gamma|K|_{\infty}^{h-1} L_{e^{C_2(1+C_1M)}}^FC_4(M,d)e^{(h-1)C_2(1+C_1M)} \left(\frac{|K|_{\infty}}{\gamma} \right)^{d+1}.
		\end{align*}
		For showing that let us consider the inductive procedure from the proof of Proposition \ref{prop:muellerprop7undcor8} again. Therein we estimate using the preceding result
		\begin{align*}
			&\int\limits_{V_{\rho}}^{}F(\mathcal{B}u_{i+1})\dx\\
			\leq &\int\limits_{V_{\sum\limits_{j=0}^{i}\rho_j}}^{}F(\mathcal{B}u_{i+1})\dx+\int\limits_{V_{\rho}\backslash V_{\sum\limits_{j=0}^{i}\rho_j}}^{}F(\mathcal{B}u)\dx\\
			\leq 	&(1+10\gamma_i)\int\limits_{V_{\sum\limits_{j=0}^{i}\rho_j}}^{}F(\mathcal{B}u_i)\dx+L_{M_i}^F\gamma_iM_i^{h-1}2^d\lambda_i\left(\frac{(1+C_1(M+\bar{\gamma}))\bar{M}}{\gamma_i}\right)^{d+1}+(1+10\gamma_i)\int\limits_{V_{\rho}\backslash V_{\sum\limits_{j=0}^{i}\rho_j}}^{}F(\mathcal{B}u)\dx\\
			\leq &\prod\limits_{j=0}^{i}(1+10\gamma_j)\int\limits_{V_{\rho}}^{}F(\mathcal{B}u)\dx+\sum\limits_{j=0}^{i}L_{M_j}^F\gamma_jM_j^{h-1}2^d\lambda\big((1+C_1(M+\bar{\gamma}))\bar{M}\big)^{d+1}\delta^{-(d+1)}\alpha^{\frac{j}{2}}\left(\prod\limits_{k=j+1}^{i}(1+10\gamma_{k}) \right)\\
			\leq 	&e^{10\sum\limits_{j=0}^{\infty}\gamma_j}\int\limits_{V_{\rho}}^{}F(\mathcal{B}u)\dx+e^{10\sum\limits_{j=0}^{\infty}\gamma_j}\bar{\gamma}L_{\bar{M}}^F\bar{M}^{h-1}2^d\lambda\big((1+C_1(M+\bar{\gamma}))\bar{M}\big)^{d+1}\delta^{-(d+1)}\sum\limits_{j=0}^{\infty}\alpha^{\frac{j}{2}}\\
			\leq &e^{10\gamma}\int\limits_{V_{\rho}}^{}F(\mathcal{B}u)\dx+e^{10\gamma}\gamma L_{e^{C_2(1+C_1M)}}^FC_4(M,d)e^{(h-1)C_2(1+C_1M)}\lambda \gamma^{-(d+1)},
		\end{align*}
		where we used that $\bar{M}=e^{\delta\bar{\alpha}}\leq e^{C_2(1+C_1M)}$.\\
		Note that we showed in the proof of Proposition \ref{prop:muellerprop7undcor8} that $u_i\rightarrow g$ in $W^{l,1}_{\operatorname{loc}}(\R^d,\R^m)$ which implies $\mathcal{B}u_i\rightarrow \mathcal{B}g$ in $L^1(V_{\rho})$. Therefore
		\begin{align*}
			\int\limits_{V_{\rho}}^{}F(\mathcal{B}g)\dx&\leq \lim\limits_{i\rightarrow \infty}\int\limits_{V_{\rho}}^{}L_{\bar{M}}^F|\mathcal{B}u_i-\mathcal{B}g|\dx+\underset{i\rightarrow\infty}{\liminf}\int\limits_{V_{\rho}}^{}F(\mathcal{B}u_i)\dx\\
			&\leq e^{10\gamma}\int\limits_{V_{\rho}}^{}F(\mathcal{B}u)\dx+e^{10\gamma}\gamma L_{e^{C_2(1+C_1M)}}^FC_4(M,d)e^{(h-1)C_2(1+C_1M)}\lambda \gamma^{-(d+1)}.
		\end{align*}
		\textit{Theorem \ref{theo:mueller4}}: We finish the argument by proving
		\begin{align*}
			\underset{j\rightarrow \infty}{\limsup}\int\limits_{\Omega}^{}F(\mathcal{B}g_j)\dx\leq \underset{j\rightarrow\infty}{\limsup}\int\limits_{\Omega}^{}F(\mathcal{B}u_j)\dx.
		\end{align*}
		In the following we use the notation and the rescaling of the proof of Theorem \ref{theo:mueller4}. Let $U,V$ be open with $V\subset\subset U\subset \subset \Omega$, while noting that $\Omega$ is bounded by the assumptions of Corollary \ref{cor:convexintegralbound}. Further let $\varphi\in C_c^{\infty}(V)$ be such that $0\leq \varphi\leq 1$. We already proved that $\mathcal{B}u_j\overset{*}{\rightharpoonup}\mathcal{B}u_0$ in $L^{\infty}(\Omega)$. For $w_j:=\varphi u_j+(1-\varphi)u_0$ it holds that
		\begin{align*}
				\int\limits_{\Omega}^{}F(\mathcal{B}w_j)\dx=&\int\limits_{\Omega\backslash V}^{}F(\mathcal{B}u_0)\dx+\int\limits_{V}^{}F(\varphi\mathcal{B}u_j+(1-\varphi)\mathcal{B}u_0)\dx\\
				&+\int\limits_{V}^{}F(\varphi\mathcal{B}u_j+(1-\varphi)\mathcal{B}u_0+\tilde{R}_j)-F(\varphi\mathcal{B}u_j+(1-\varphi)\mathcal{B}u_0)\dx,
		\end{align*}
		where $\tilde{R}_j:=\mathcal{B}w_j-(\varphi\mathcal{B}u_j+(1-\varphi)\mathcal{B}u_0)$ satisfies the estimate
		\begin{align*}
			\int\limits_{U}^{}|\tilde{R}_j|\dx\leq C_{\mathcal{B}}\int\limits_{U}^{}|R_j|\dx\overset{j\rightarrow\infty}{\rightarrow}0 
		\end{align*}
		with $C_{\mathcal{B}}:=\sum\limits_{|\alpha|=l}^{}|B^{\alpha}|$ and $R_j$ as in the proof of Theorem \ref{theo:mueller4}. Hence,
		\begin{align*}
			r_j:=\int\limits_{V}^{}F(\varphi\mathcal{B}u_j+(1-\varphi)\mathcal{B}u_0+\tilde{R}_j)-F(\varphi\mathcal{B}u_j+(1-\varphi)\mathcal{B}u_0)\dx
		\end{align*}
		converges to zero for $j\rightarrow \infty$ since the functions inside $F$ are uniformly bounded and $F$ is locally Lipschitz. Thus,
		\begin{align*}
			\int\limits_{\Omega}^{}F(\mathcal{B}w_j)\dx&\leq \int\limits_{\{\varphi=1\}\cap V}^{}F(\mathcal{B}u_j)\dx+\int\limits_{\Omega\backslash V}^{}F(\mathcal{B}u_0)\dx+|\{\varphi\neq 1\}\cap V|\cdot L_M^FC_{\mathcal{B}}M+r_j\\
			&\leq \int\limits_{\Omega}^{}F(\mathcal{B}u_j)\dx+L_M^FC_{\mathcal{B}}M(|\Omega\backslash V|+|\{\varphi\neq 1\}\cap V|)+r_j.
		\end{align*}
		Here we used that $D^lu_j\overset{\ast}{\rightharpoonup}D^lu_0$ in $L^{\infty}(\Omega)$, and hence $\|D^lu_0\|_{L^{\infty}}\leq M$.\\
		Let $\delta>0$. As $\lambda_j\rightarrow 0$, by using the result of the previous paragraph corresponding to Proposition \ref{prop:muellerprop7undcor8} there exists $j_0(U,V,\varphi,\delta)$ such that for all $j\geq j_0$ there is a function $g_j\in W^{l,\infty}(\Omega,\R^m)$ with the properties
		\begin{align*}
			\{w_j\neq g_j\}&\subset U,\\
			|\{w_j\neq g_j\}|&<\delta,\\
			\mathcal{B}g_j&\in K_{\delta}\text{ a.e. in }U,\\
			\int\limits_{\Omega}^{}F(\mathcal{B}g_j)\dx&\leq e^{\delta}\int\limits_{\Omega}^{}F(\mathcal{B}w_j)\dx+\delta.
		\end{align*}
		Thus, in particular we obtain
		\begin{align*}
			\int\limits_{\Omega}^{}F(\mathcal{B}g_j)\dx\leq e^{\delta}\int\limits_{\Omega}^{}F(\mathcal{B}u_j)\dx+e^{\delta}L_M^FC_{\mathcal{B}}M(|\Omega\backslash V|+|\{\varphi\neq 1\}\cap V|)+e^{\delta}r_j+\delta.
		\end{align*}
		Now choose $\delta_k:=\frac{1}{k}$ and $V_k\subset \subset \tilde{U}_k\subset \subset \Omega$ as well as $\varphi\in C_c^{\infty}(V_k)$ such that $|\tilde{U}_k\backslash V_k|<\frac{1}{k}$, $|\{\varphi_k\neq 1\}\cap V_k|<\frac{1}{k}$, and $0\leq \varphi_k\leq 1$ as in Step 3 in the proof of Theorem 4 in \cite{M99}. Since $\Omega$ is bounded, we can furthermore assume $|\Omega\backslash\tilde{U}_k|<\frac{1}{k}$. Then using the previous step of the proof we infer that there exist $j_k$ such that for all $j\geq j_k$ there exist $g_j$ such that
		\begin{align}
			 g_j&= u_0\text{ on }\Omega\backslash \tilde{U}_k,\\
			 |\{g_j\neq u_j\}|&<\frac{4}{k},\nonumber\\
			 \dist(\mathcal{B}g_j,K)&\leq\frac{1}{k},\nonumber\\
			 \int\limits_{\Omega}^{}F(\mathcal{B}g_j)\dx&\leq e^{\frac{1}{k}}\left(\int\limits_{\Omega}^{}F(\mathcal{B}u_j)\dx+L_M^FC_{\mathcal{B}}M\frac{3}{k}+r_j \right)+\frac{1}{k}.\label{eq:integralboundfinal}
		\end{align}
		Thus, choosing $U_j:=\tilde{U}_k$ if $j_k\leq j\leq j_{k+1}$ yields additionally to the results of Theorem \ref{theo:mueller4} that
		\begin{align*}
			\underset{j\rightarrow \infty}{\limsup}\int\limits_{\Omega}^{}F(\mathcal{B}g_j)\dx\leq \underset{j\rightarrow\infty}{\limsup}\int\limits_{\Omega}^{}F(\mathcal{B}u_j)\dx,
		\end{align*}
		which finishes the proof.
	\end{proof}
	\begin{rem}\label{rem:questiontwo}
		In the special case $\mathcal{B}=D^2$ we can in fact discard the $L^{\infty}$-bound and essentially combining the interpolation method from the proof of Lemma \ref{lemm:muellerlemma5} with an adapted version of M\"uller's original proof yields the result of Theorems \ref{theo:muellertheo2} and \ref{theo:mueller4} for this particular case.\\
		Now let $m\leq N\cdot d$ and let $\mathcal{B}\colon C^{\infty}(\R^d,\R^m)\to C^{\infty}(\R^d,\R^k)$ be a general homogeneous differential operator of first order. There exists a linear map $\mathbb{B}\colon\R^{N\cdot d}\to \R^m$ such that $\mathcal{B}u=\mathbb{B}\cdot Du$ for all $u\in C^{\infty}(\R^d,\R^m)$. Then observe that
		\begin{align*}
			\mathbb{B}\times \operatorname{pr}_{\ker\mathbb{B}}\colon \R^{N\cdot d}\to \image\mathbb{B}\times \ker\mathbb{B}
		\end{align*}
		is surjective and hence bijective as $\image\mathbb{B}\times \ker\mathbb{B}\simeq \R^{N\cdot d}$. Let us denote by $T_{\mathbb{B}}$ the inverse of the above mapping. In particular, for smooth enough $u$ it holds that
		\begin{align*}
			T_{\mathbb{B}}\cdot \left(\mathcal{B}u, \operatorname{pr}_{\ker\mathbb{B}} Du\right)=Du\ \ \ \text{ and }\ \ \ \mathbb{B}\cdot T_{\mathbb{B}}=\operatorname{pr}^{\image\mathbb{B}\times \ker\mathbb{B}}_{\image\mathbb{B}}.
		\end{align*}
		Let now $(u_j)\subset W^{1,\infty}(\Omega)$ be such that $\|Du_j\|_{L^{\infty}}\leq M|K|_{\infty}$ and $\dist(\mathcal{B}u_j,K)\overset{L^1}{\rightarrow}0$. Then
		\begin{align*}
			\dist\left(Du_j,T_{\mathbb{B}}\left(\operatorname{pr}^{\R^m}_{\image\mathbb{B}}K\times B_M^{\operatorname{dim}(\ker\mathbb{B})}(0)\right)\right)\leq \|T_{\mathbb{B}}\|\dist\left(\mathcal{B}u_j,\operatorname{pr}^{\R^m}_{\image\mathbb{B}}K\right)\overset{L^1}{\rightarrow}0,
		\end{align*}
		since $\dist\left(\operatorname{pr}_{\ker\mathbb{B}}Du_j,B_M^{\operatorname{dim}(\ker\mathbb{B})}(0)\right)=0$ and $\dist\left(\mathcal{B}u_j,\operatorname{pr}^{\R^m}_{\image\mathbb{B}}K\right)= \dist(\mathcal{B}u_j,K)$. M\"uller's results for the gradient then provide us with a sequence $(w_j)$ such that
		\begin{align*}
			\dist\left(Dw_j,T_{\mathbb{B}}\left(\operatorname{pr}_{\image\mathbb{B}}K\times B_M^{\operatorname{dim}(\ker\mathbb{B})}(0) \right)\right)\overset{L^{\infty}}{\rightarrow}0.
		\end{align*}
		Hence,
		\begin{align*}
			\dist\left(\mathcal{B}w_j,K \right)&=\dist\left(\mathcal{B}w_j,\operatorname{pr}^{\R^m}_{\image\mathbb{B}}K \right)=\dist\left(\mathbb{B}\cdot Dw_j,\mathbb{B}\cdot T_{\mathbb{B}}\left(\operatorname{pr}^{\R^m}_{\image\mathbb{B}}K\times B_M^{\operatorname{dim}(\ker\mathbb{B})}(0) \right) \right)\\
			&\leq \|\mathbb{B}\|\dist\left(Dw_j,T_{\mathbb{B}}\left(\operatorname{pr}^{\R^m}_{\image\mathbb{B}}K\times B_M^{\operatorname{dim}(\ker\mathbb{B})}(0) \right)\right)\overset{L^{\infty}}{\rightarrow}0.
		\end{align*}
		We thus could derive the results of Theorems \ref{theo:muellertheo2} and \ref{theo:mueller4} for general first order operators $\mathcal{B}$ from those for the gradient. The same procedure can be carried out for second order operators $\tilde{\mathcal{B}}$ by writing $\tilde{\mathcal{B}}u=\mathbb{B}\cdot D^2u$ for every smooth $u$ and some fixed matrix $\mathbb{B}$.\\
		Note that for the method presented in this remark, we still need to assume the uniform $\|\cdot\|_{L^{\infty}}$-bound to infer the result for general differential operators. More importantly, our strategy of proving all the auxiliary results for $\mathcal{B}u$ directly keeping track of the $\|\cdot\|_{L^{\infty}}$-norms offered us access to showing further properties of our truncation method like the conservation of convex integral bounds as in Corollary \ref{cor:convexintegralbound}. For that an important ingredient was using that $F$ is locally Lipschitz in combination with the uniform boundedness of the functions involved.
	\end{rem}
	\section{Two examples of linear constraints with potentials of first or second order}\label{sect:twoexamples}
	The linear constraint $\curl f=0$ with potential $f=\nabla u$ can be handled by the well-known techniques of M\"uller \cite{M99} and Zhang \cite{Z92}. In the following, we want to present two examples of homogeneous differential operators which may not be treated by the latter frameworks. However, these operators have potentials of order one or two such that our results can be applied.
	\subsection{Symmetric gradient}
	The symmetric gradient of a vector field $u\colon\Omega\to \R^d$
	\begin{align*}
		e(u):=\frac{1}{2}(\nabla u+\nabla^{\operatorname{T}}u)
	\end{align*}
	is a first order homogeneous differential operator that appears for example in the theory of linear elasticity. One cannot estimate the gradient by this operator neither in the $L^1$-norm nor in the $L^{\infty}$-norm. Hence, the results regarding the gradient are not applicable for this case. The corresponding linear constraint for which the symmetric gradient is the potential reads as follows
	\begin{align*}
		\tilde{A}f:=\left(\sum\limits_{i=1}^{d}\frac{\partial^2f_{ij}}{\partial x_i\partial_k}+\frac{\partial^2f_{ik}}{\partial x_i\partial_j}-\frac{\partial^2f_{ii}}{\partial x_j\partial_k}-\frac{\partial^2f_{jk}}{\partial x_i\partial_i}\right)_{1\leq j,k\leq d}=0,
	\end{align*}
	see example 3.10 in \cite{FM99}.
	\subsection{A potential for the linearized isentropic Euler equations}\label{sect:euler}
	Now, we will start with a certain linear constraint and construct the corresponding potential. This potential will turn out to be of second order.\\
	Consider the linearization of the isentropic Euler system on $\R^{d+1}$
	\begin{align}
		\begin{split}
		\partial_t m+\diverg M+\nabla q&=0,\\
		\partial_t\rho+\diverg m&=0\label{eq:CEsubsolution}
		\end{split}
	\end{align}
	for the unknowns $z=(\rho,m,M,q)\in \R^+\times \R^d\times S_0^d\times\R^+\subset \R\times \R^d\times S_0^d\times\R\simeq \R^N$ with $N=\left(1+\frac{d}{2}\right)(d+1)$. In the following let $\mathcal{A}\colon C^{\infty}(\R^{d+1},\R^N)\to C^{\infty}(\R^{d+1},\R^d)$ be the homogeneous first order differential operator implementing (\ref{eq:CEsubsolution}), i.e. (\ref{eq:CEsubsolution}) is equivalent to $\mathcal{A}z=0$.\\
	There is an ongoing joint project with Emil Wiedemann in which the uniform convergence to a certain set for generating sequences of Young measures is crucial. The considered generating sequences in this project are $\mathcal{A}$-free with $\mathcal{A}$ as above. These bounds are provided by Corollary \ref{cor:muellercorollary3} if we find a homogeneous potential operator $\mathcal{B}$ of order at most two. For the case $d=3$ Emil Wiedemann showed me in a private communication how to derive such a potential. His proof essentially generalizes to every $d\geq 1$ which we now demonstrate. Set $\tilde{N}:=\frac{1}{4}(d+1)^2\cdot d^2$.
	\begin{prop}\label{prop:potentialconstructioneuler}
		Let $\mathcal{A}$ be as above. There exists a linear homogeneous partial differential operator $\mathcal{B}\colon C^{\infty}\left(\R^{d+1},\R^{\tilde{N}}\right)\to C^{\infty}(\R^{d+1},\R^N)$ of order two such that $\ker\mathcal{A}=\operatorname{im}\mathcal{B}$.
	\end{prop}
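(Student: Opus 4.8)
The plan is to recast the constraint $\mathcal{A}z=0$ as a divergence‑free condition on a symmetric matrix field, and then to produce a second‑order potential for that condition. First, to $z=(\rho,m,M,q)$ I would associate the symmetric $(d+1)\times(d+1)$ matrix field on $\R^{d+1}$, with coordinates $(t,x_1,\dots,x_d)$ and index $0\leftrightarrow t$,
\[ U:=\begin{pmatrix} \rho & m^{\operatorname{T}} \\ m & M+qI_d \end{pmatrix}. \]
Since $M$ is traceless, $z\mapsto U$ is a pointwise linear isomorphism onto $\operatorname{Sym}(d+1)$ — one recovers $\rho=U_{00}$, $m=(U_{0i})_i$, $q=\tfrac1d\sum_{i\ge 1}U_{ii}$, $M=(U_{ij})_{i,j\ge1}-qI_d$ — and $\dim\operatorname{Sym}(d+1)=\binom{d+2}{2}=N$. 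A one‑line computation shows that $\mathcal{A}z=0$ is equivalent to $\operatorname{Div}U=0$, where $(\operatorname{Div}U)_\mu:=\sum_{\nu=0}^d\partial_\nu U_{\mu\nu}$ with $\partial_0=\partial_t$: the $\mu=0$ component is the continuity equation and the components $\mu=1,\dots,d$ are the momentum equations. Hence it suffices to exhibit a homogeneous second‑order operator on $\R^{d+1}$ whose image is $\{U\in C^\infty(\R^{d+1},\operatorname{Sym}(d+1)):\operatorname{Div}U=0\}$.

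For $d+1=3$ this is precisely the classical Beltrami/Airy stress function construction, $U=\curl\curl\Phi$ with $\Phi$ a symmetric matrix field. For general $d$ I would build the potential by hand by iterating the Poincaré lemma. Each row of $U$ is a space‑time divergence‑free vector field on the contractible domain $\R^{d+1}$, hence equals $\operatorname{Div}$ of an antisymmetric $2$‑tensor field; substituting these into $U$ and imposing the symmetry $U=U^{\operatorname{T}}$ leaves a residual condition that is again of divergence type, and is resolved, via a further application of the Poincaré lemma, in terms of a $4$‑tensor field with two antisymmetric index pairs. Using the $1+d$ splitting of $\R^{d+1}$ together with the gauge freedom in the successive potentials, one can arrange that these index pairs involve only the spatial variables, i.e. that the potential takes values in the space of tensors $\Phi=(\Phi_{ij,kl})$, $i,j,k,l\in\{1,\dots,d\}$, antisymmetric in $(i,j)$ and in $(k,l)$; this space has dimension $\binom{d}{2}^2=\tfrac14(d-1)^2 d^2=\tilde{N}$. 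Unwinding the substitutions expresses $U$ (hence $z$) through second derivatives of $\Phi$, which defines a homogeneous operator $\mathcal{B}\colon C^\infty(\R^{d+1},\R^{\tilde{N}})\to C^\infty(\R^{d+1},\R^N)$ of order two.

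It then remains to check $\ker\mathcal{A}=\operatorname{im}\mathcal{B}$. The inclusion $\operatorname{im}\mathcal{B}\subseteq\ker\mathcal{A}$ is a direct computation: after the identification above, $\operatorname{Div}(\mathcal{B}\Phi)$ is a third‑order expression in $\Phi$ in which a symmetric pair of derivatives is contracted against an antisymmetric index pair of $\Phi$ and therefore vanishes; one also verifies that $\mathcal{B}\Phi$ has values in the image of $z\mapsto U$, i.e. that its lower block has the correct traceless‑plus‑scalar form. The reverse inclusion $\ker\mathcal{A}\subseteq\operatorname{im}\mathcal{B}$ is obtained by running the construction of the second paragraph in reverse on a given solution. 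The hard part will be exactly this reverse direction: one must carry out the successive extractions of potentials consistently and, crucially, keep track of their gauge freedom, so that the symmetry constraint $U=U^{\operatorname{T}}$ and the trace split can be absorbed into the single spatial tensor $\Phi$ without raising the differential order above two. (That $\mathcal{A}$ admits \emph{some} homogeneous potential is immediate from \cite{R18}, $\mathcal{A}$ being constant‑rank, but that abstract argument controls neither the order nor the target dimension, which is the whole point here.)
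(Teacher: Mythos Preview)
Your plan is the same as the paper's: identify $z$ with the symmetric $(d{+}1)\times(d{+}1)$ block matrix $U$, rewrite $\mathcal{A}z=0$ as $\operatorname{Div}U=0$, and then iterate the Poincar\'e lemma twice (first row-wise to get antisymmetric fields $\varphi^i$ with $(\operatorname{div}\varphi^i)_j=U_{ij}$, then on the combinations $\varphi^i_{jk}-\varphi^j_{ik}$ forced by the symmetry of $U$ to get antisymmetric fields $\psi^{ij}$). The only substantive difference is in how the target dimension $\tilde N=\binom{d}{2}^2$ is reached. You propose to exploit gauge freedom and the $1{+}d$ splitting to force the tensor indices of the potential to be purely spatial; the paper does no gauge-fixing at all and instead shows directly that the $\tilde N$ components $\psi^{ij}_{kl}$ with $i<j$, $k<l$ already suffice to reconstruct every $\varphi^i_{jk}$ (and hence $U$): the diagonal cases $k\in\{i,j\}$ give $\varphi^i_{ij}$ immediately from antisymmetry, and for each triple of pairwise distinct indices one inverts the explicit $3\times 3$ matrix $\left(\begin{smallmatrix}1&-1&0\\-1&0&-1\\0&-1&1\end{smallmatrix}\right)$. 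The paper's route is more concrete and avoids having to track gauge transformations through two layers of potentials; your route is cleaner conceptually but, as you yourself flag, the gauge-fixing step is exactly where the work lies and would have to be written out carefully.
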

	\begin{proof}
		Let $z=(\rho,m,M,q)\in C^{\infty}(\R^{d+1},\R\times \R^d\times S_0^d\times\R)$ be such that $\mathcal{A}z=0$. Note that the map
		\begin{align*}
			\R\times \R^d\times S_0^d\times\R\to S^{d+1},\ (\rho,m,M,q)\mapsto U:=\begin{pmatrix}
			M+qI_d& m\\
			m& \rho
			\end{pmatrix}
		\end{align*}
		is an isomorphism between vector spaces since $M$ is trace-free. Using this identification the PDE (\ref{eq:CEsubsolution}) is equivalent to
		\begin{align*}
			\operatorname{div}_{(t,x)}U(t,x)=0.
		\end{align*}
		From now on the divergence will be understood with respect to $(t,x)$ and hence we simply write $\operatorname{div}$ for $\operatorname{div}_{(t,x)}$.\\
		Every row of $U$ is divergence-free. Thus, Poincar\'e's lemma provides us for every $i=1,...,d+1$ with an antisymmetric matrix-field $\varphi^i$ such that for all $j=1,...,d+1$
		\begin{align}
			(\operatorname{div}\varphi^i)_j=U_{ij}.\label{eq:poincare1}
		\end{align}
		Since $U$ is symmetric, we infer for all $i,j=1,...,d+1$
		\begin{align*}
			\sum\limits_{k=1}^{d}\partial_k(\varphi^i_{jk}-\varphi^j_{ik})=0.
		\end{align*}
		So, we may apply Poincar\'e's lemma once more to obtain antisymmetric matrix fields $\psi^{ij}$ such that
		\begin{align}
			(\operatorname{div}\psi^{ij})_k=\varphi^i_{jk}-\varphi^j_{ik}.\label{eq:poincare2}
		\end{align}
		\textbf{Claim: }The $\tilde{N}=\frac{1}{4}(d+1)^2\cdot d^2$ out of $d^4$ functions $\psi^{ij}_{kl}$ corresponding to the choices $1\leq i<j\leq d+1$ and $1\leq k<l \leq d+1$ suffice to determine $U$.\\
		Indeed, setting $k=i$ or $k=j$ in (\ref{eq:poincare2}) and using the antisymmetry of $\varphi$ yields the functions $\varphi^i_{ij}$ for $i,j=1,...,d+1$. Now fix pairwise distinct $i,j,k$. As $\varphi$ has three indices, there are six functions that have to be determined. For every triple $i,j,k$ the corresponding six equations (\ref{eq:poincare2}) decouple into two systems of three equations due to the antisymmetry of $\varphi^i,\varphi^j,\varphi^k$. Without loss of generality assume $i<j<k$. One of these systems then reads
		\begin{align*}
			(\operatorname{div}\psi^{ij})_k&=\varphi^i_{jk}-\varphi^j_{ik},\\
			(\operatorname{div}\psi^{ik})_j&=-\varphi^i_{jk}-\varphi^k_{ij},\\
			(\operatorname{div}\psi^{jk})_i&=-\varphi^j_{ik}+\varphi^k_{ij},
		\end{align*}
		where only those functions $\psi$ with left superscript larger than the right superscript were needed. Observe that
		\begin{align}
			\begin{pmatrix}
			\varphi^i_{jk}-\varphi^j_{ik}\\
			-\varphi^i_{jk}-\varphi^k_{ij}\\
			-\varphi^j_{ik}+\varphi^k_{ij}
			\end{pmatrix}=\begin{pmatrix}
			1& -1& 0\\
			-1& 0& -1\\
			0& -1& 1
			\end{pmatrix}\cdot \begin{pmatrix}
			\varphi^i_{jk}\\
			\varphi^j_{ik}\\
			\varphi^k_{ij}
			\end{pmatrix}.\label{eq:invertible}
		\end{align}
		The matrix in the above equation is invertible. Thus, the considered $\psi$ uniquely determine $\varphi^i_{jk},\varphi^j_{ik},\varphi^k_{ij}$ and hence by antisymmetry also the other three $\varphi$. In this way we obtain all $\varphi$, which by (\ref{eq:poincare1}) yields $U$. This proves the claim.\\
		By inverting the transformation (\ref{eq:invertible}) the just described procedure can be summarized into a linear homogeneous differential operator $\mathcal{B}$ of order two converting the $\tilde{N}$ functions $\psi^{ij}_{kl}$ into the matrix-field $U$. Thus, $\ker\mathcal{A}\subset \operatorname{im}\mathcal{B}$.\\
		Conversely, for any choice of functions $\left(\psi^{ij}_{kl} \right)^{i<j}_{k<l}$ the implementation $\mathcal{B}\psi=:U$ yields a divergence-free $U$ by basically reversing the previous arguments and observing that the divergence of an antisymmetric smooth matrix-field yields a divergence-free vector field. 
	\end{proof}
	We want to conclude the paper with a remark about the applicability of our results in view of the assumption $\|D^lu\|_{L^{\infty}}\leq M|K|_{\infty}$.
	\begin{rem}\label{rem:refereequeryone}
		The proof of the results in this paper relied heavily on the assumption $\|D^lu_j\|_{L^{\infty}}\leq M|K|_{\infty}$, where $u_j$ is the considered sequence we want to truncate. This assumption has the technical reason to provide us with pointwise bounds used in the proofs. But this kind of uniform $L^{\infty}$-bound occurs naturally for example in sequences of solutions to the linearized compressible Euler equations with one space dimension:\\
		Let $z_j$ be a sequence of solutions of (\ref{eq:CEsubsolution}) for $(t,x)\in(0,1)\times (0,1)$, i.e. $d=1$, with the property that $\|z_j\|_{L^{\infty}}\leq M$ for some $M>0$. Such uniformly bounded solutions $z$ can be obtained from bounded solutions $(\rho,u)$ of the compressible Euler equations by setting $z:=\left(\rho,u,0,\frac{u^2}{\rho}\right)$ if we require $\rho$ to be uniformly bounded away from zero. Further, one can show that solutions $(\rho,u)$ of this type arise from sufficiently regular initial data $\left(\rho^0,u^0\right)$ satisfying that $\rho^0$ is uniformly bounded away from zero, cf. section 4 in \cite{D83}.\\
		Now the construction in Proposition \ref{prop:potentialconstructioneuler} shows that the potential of $z$ is obtained by applying Poincar\'e's lemma twice and via an invertible linear map. Observe that if $v$ is a two-dimensional divergence-free vector field in two dimensions, then $v=\operatorname{div} \varphi$, where $\varphi=\begin{pmatrix}
			0 & g\\-g & 0
		\end{pmatrix}$ for some function $g$. In particular, $\partial_tg=v_1$ and $\partial_xg=v_2$. Hence, $\|Dg\|_{L^{\infty}}\leq \|v\|_{L^{\infty}}$. Now if we choose $g$ such that $\dashint g\dt\dx =0$, then by the Poincar\'e-Wirtinger Inequality we get that $\|g\|_{L^{\infty}}\leq c\|v\|_{L^{\infty}}$. We repeat this argument to infer that $\|D^2u\|_{L^{\infty}((0,1)\times (0,1))}\leq C\|z\|_{L^{\infty}((0,1)\times (0,1))}$ for some fixed constant $C>0$, where $u$ is the potential of $z$, i.e. $\mathcal{B}u=z$.
	\end{rem}
	\section*{Acknowledgements}
	The author wants to thank Emil Wiedemann for proposing the problem and for very interesting and fruitful conversations. The author is also very grateful for the interesting suggestions of the unknown referee from Calculus of Variations and Partial Differential Equations, who proposed the method presented in Remark \ref{rem:questiontwo} and who inspired Remark \ref{rem:refereequeryone}.

\end{document}